\newtheorem{theorem}[subsection]{Theorem}
\theoremstyle{definition}
\newtheorem{definition}{Definition}[section]
\newtheorem{remark}[definition]{Remark}
\newtheorem{example}[definition]{Example}
\theoremstyle{plain}
\newtheorem{proposition}[definition]{Proposition}
\newtheorem{lemma}[definition]{Lemma}
\newtheorem{corollary}[definition]{Corollary}
\numberwithin{equation}{section}
\title{The bicorn curves on closed surfaces}
\date{}
\author[T.~Katayama]{Takuya Katayama}
\address{
(Takuya Katayama)
Osaka Central Advanced Mathematical Institute, 
3-3-18 Sugimoto, Sumiyoshi-ku, Osaka 558-8585, Japan
}
\email{tak.katayama0221@gmail.com}
\author[E.~Kuno]{Erika Kuno}
\address{
(Erika Kuno)
College of Engineering,
Shibaura Institute of Technology,
3-7-5 Toyosu, Koto-ku, Tokyo 135-8548, JAPAN
}
\email{e-kuno@shibaura-it.ac.jp}
\begin{document}

\begin{abstract}
This paper focuses on using the theory of bicorn curves in the context of closed surfaces to understand hyperbolic phenomena of the curve graphs of those surfaces.
We prove that the curve graph of any closed surface is $15$-hyperbolic with one exception. 
Furthermore, we provide significantly tighter bounds for the bounded geodesic image theorem, originally proven by Masur--Minsky. 
\end{abstract}

\maketitle

\section{Introduction}

Throughout this paper, by $F$ we denote a closed orientable or non-orientable surface of $\chi(F) \leq -2$. 
The {\it curve graph} $\mathscr{C}(F)$ of $F$ is a graph whose vertex set consists of the homotopy classes of essential simple closed curves on $F$. 
Two vertices of $\mathscr{C}(F)$ are connected by an edge if they can be represented disjointly on $F$. 
The graph has a usual length metric $d_{F}$ when we endow each edge with length $1$. 
An well-known inequality on the distance of the curve graph, due to Hempel \cite[Lemma 2.1]{Hem01}, is that 
$$d_{F} (\alpha, \beta) \leq 2 \log_{2} (i (\alpha, \beta)) + 2$$
 for all vertices $\alpha, \beta \in \mathscr{C}(F)$ with $i (\alpha, \beta) \geq 1$.  
Here, $i(\alpha, \beta)$ is the geometric intersection number of $\alpha$ and $\beta$. 
We improve the above inequality as follows. 

\begin{theorem}\label{log_upp_1}
Let $\alpha, \beta$ be essential simple closed curves on $F$ with $i(\alpha, \beta) \geq 2$. 
Then we have 
$$d_F(\alpha, \beta) \leq 2 \log_{3} \left( \frac{9}{4} i(\alpha, \beta) \right).  $$
\end{theorem}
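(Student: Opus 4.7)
The plan is to prove the bound by strong induction on $n := i(\alpha,\beta)$, for $n \ge 2$.

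For the base case $n = 2$, I would argue directly that $d_F(\alpha,\beta) \le 2$. The graph $\alpha \cup \beta$ has $V = 2$ vertices and $E = 4$ edges, so if every complementary region were a disk, Euler's formula would give $F = \chi(F) + 2 \le 0$ on any closed surface with $\chi(F) \le -2$. Hence $\alpha \cup \beta$ does not fill $F$, so an essential simple closed curve lies in $F \setminus (\alpha \cup \beta)$ and witnesses $d_F(\alpha,\beta) \le 2$. Since $2 \le 2\log_3(9/2) \approx 2.74$, the base case is established.

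For the inductive step with $n \ge 3$, I would use the bicorn curve machinery developed earlier in the paper. Isotope $\alpha$ and $\beta$ into minimal position and select any arc $b \subset \beta \setminus \alpha$ with endpoints $p,q \in \alpha \cap \beta$. The two arcs of $\alpha$ joining $p$ to $q$ contain $k_1$ and $k_2$ points of $\alpha \cap \beta$ (endpoints included), with $k_1 + k_2 = n + 2$; take $a$ to be the shorter one, so that $k := |a \cap \beta| \le (n+2)/2$. The bicorn $\gamma := a \cup b$, smoothed at its two corners, is embedded because $\operatorname{int}(b)$ is disjoint from $\alpha$; by bicorn theory, it is an essential two-sided simple closed curve, and a small isotopy pushes it off $\alpha$, so $d_F(\alpha,\gamma) \le 1$. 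Its intersection with $\beta$ counts only the interior crossings of $a$ with $\beta$, giving
\[
i(\gamma,\beta) \le k - 2 \le \frac{n-2}{2}.
\]

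To close the induction, apply the inductive hypothesis to $\gamma$ and $\beta$ whenever $i(\gamma,\beta) \ge 2$: this yields $d_F(\gamma,\beta) \le 2\log_3(9(n-2)/8)$, and the triangle inequality gives
\[
d_F(\alpha,\beta) \;\le\; 1 + 2\log_3\!\left(\frac{9(n-2)}{8}\right).
\]
An elementary calculation verifies that this is at most $2\log_3(9n/4)$ for every $n \ge 3$; after rearrangement the inequality is equivalent to $2n/(n-2) \ge \sqrt{3}$, which is trivially true. In the edge cases $i(\gamma,\beta) \in \{0,1\}$, direct inspection gives $d_F(\gamma,\beta) \le 2$, so $d_F(\alpha,\beta) \le 3$, which is again bounded by $2\log_3(9n/4)$ once $n \ge 3$.

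The hardest point is not the numerical inequality but rather the verification that the bicorn $\gamma$ is essential and two-sided, particularly when $F$ is non-orientable. This requires using the minimal position of $\alpha,\beta$ to rule out bigons (for essentialness) together with a parity argument or a judicious choice of $b$ (for two-sidedness); I would rely on the bicorn lemmas proved in the preceding sections to dispose of these technicalities.
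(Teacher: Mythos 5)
Your overall strategy (induction on $n=i(\alpha,\beta)$, reduce via a bicorn, pay with the triangle inequality) is the paper's strategy, and your base case $n=2$ via the Euler characteristic is essentially the paper's Lemma \ref{start_log}. But the inductive step has a genuine gap at the claim that ``a small isotopy pushes $\gamma=a\cup b$ off $\alpha$, so $d_F(\alpha,\gamma)\le 1$.'' What is true is only that $\gamma\cap\alpha=a$ is an arc, so after pushing $a$ to one side of $\alpha$ you are left with $i(\alpha,\gamma)\le 1$: the two ends of $b$ may emanate from opposite local sides of $\alpha$, in which case one transverse intersection survives no matter which side you push to. This is not a removable technicality — there are configurations (e.g.\ all strands of $\beta$ crossing a neighbourhood of $\alpha$ coherently, as for $(1,0)$ and $(n,1)$ curves on a handle) in which \emph{every} edge $b$ of $\Gamma_\alpha(\beta)$ produces a bicorn with this opposite-sides behaviour, so no choice of $b$ rescues the claim. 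The best you can then say is $i(\alpha,\gamma)\le 1$, hence $d_F(\alpha,\gamma)\le 2$, and your recursion becomes $d_F(\alpha,\beta)\le 2+2\log_3\bigl(\tfrac{9}{4}\cdot\tfrac{n-2}{2}\bigr)$; the required inequality $2\le 2\log_3\tfrac{2n}{n-2}$ forces $\tfrac{2n}{n-2}\ge 3$, i.e.\ $n\le 6$, so the induction fails for all $n\ge 7$. A cost of $2$ per step paired with only a halving of the intersection number recovers Hempel's $2\log_2$-type bound, not the claimed $2\log_3$ bound.

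The paper closes exactly this gap with an averaging argument (Lemma \ref{1-3rd}): from two \emph{consecutive} edges $e_1,e_2$ of $\Gamma_\alpha(\beta)$ it builds three bicorns $\gamma_1,\gamma_2,\gamma_3$, each with $i(\alpha,\gamma_i)\le 2$ (so $d_F(\alpha,\gamma_i)\le 2$), whose $\beta$-intersections sum to at most $i(\alpha,\beta)$; hence one of them satisfies $i(\beta,\gamma_i)\le i(\alpha,\beta)/3$. The factor-$3$ reduction is precisely what pays for the distance-$2$ cost in base $3$, since $2=2\log_3 3$. If you want to keep your single-edge construction, you must either accept the weaker $\log_2$ bound or import this pigeonhole step.
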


Given a geodesic triangle $\Delta$ of a geodesic metric space $X$, a point of $X$ whose $\delta$-neighbourhood touches all sides of $\Delta$ is called a {\it $\delta$-center}. 
For a fixed $\delta \geq 0$, a geodesic metric space $X$ is said to be {\it $\delta$-hyperbolic} if every geodesic triangle in $X$ has a $\delta$-center. 
Masur--Minsky in \cite{MM99} proved that the curve graphs of surfaces are $\delta$-hyperbolic for some constant $\delta$ depending on the surface type. 
After Masur--Minsky's paper was published, many authors (e.g.\ \cite{A13}, \cite{BF07} \cite{B14}, \cite{CKS14}, \cite{HPW15}, \cite{K17}, \cite{MS13}, \cite{PS17}) have improved the proof and gave concrete hyperbolic constants. 
In particular, Hensel--Przytycki--Webb in \cite{HPW15} gave an extremely short proof and low bound $\delta \leq 17$ which is independent from the surface type. 
The second author generalized their result for non-orientable surfaces. 
In this paper we obtain a slightly lower hyperbolic constant for the curve graphs of closed surfaces. 

\begin{theorem}\label{hyp}
$\mathscr{C}(F)$ is 15-hyperbolic. 
\end{theorem}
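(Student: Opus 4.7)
The plan is to adapt the unicorn-path technique of Hensel--Przytycki--Webb to closed surfaces, where the analogous objects are \emph{bicorn curves}: for a pair $\alpha, \beta$ of essential simple closed curves in minimal position on $F$, a bicorn is an essential simple closed curve of the form $a \cup b$ where $a \subset \alpha$ and $b \subset \beta$ are embedded arcs meeting exactly at their two endpoints, both of which lie in $\alpha \cap \beta$. The two degenerate cases give $\alpha$ and $\beta$ themselves. First I would organise the bicorns with respect to $(\alpha,\beta)$ into a canonical \emph{bicorn path} $\alpha = c_0, c_1, \ldots, c_n = \beta$ in $\mathscr{C}(F)$ by choosing a linear order on the two endpoint intersections and showing that consecutive curves in the order are disjoint, or at worst controllably close, so that the sequence is an actual edge-path in $\mathscr{C}(F)$.

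The core of the argument is a \emph{slimness lemma} for bicorn paths: for any three vertices $\alpha,\beta,\gamma$ of $\mathscr{C}(F)$ realised in pairwise minimal position and any vertex $c$ on the bicorn path $P(\alpha,\beta)$, one has $d_F(c, P(\alpha,\gamma) \cup P(\gamma,\beta)) \leq 1$. This is proved by a direct surgery argument: writing $c = a \cup b$ and comparing $a, b$ with $\gamma$, one locates an outermost intersection point of $\gamma$ with $a$ or $b$ and from it constructs a bicorn of $(\alpha,\gamma)$ or $(\beta,\gamma)$ that is disjoint from $c$. The closed-surface setting lets us carry out this surgery without arc-boundary bookkeeping, and this is what ultimately shaves the constant relative to the bounded-boundary case.

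Once bicorn paths are shown to be $1$-slim in this sense, I would upgrade the statement to honest geodesics. The classical argument is that a family of paths closed under subpaths and satisfying a $k$-slim-triangle condition forces the ambient graph to be $\delta$-hyperbolic for an explicit $\delta$ depending linearly on $k$. Here the improved logarithmic bound of Theorem~\ref{log_upp_1} enters to sharpen the comparison between a bicorn path and a genuine geodesic with the same endpoints: Theorem~\ref{log_upp_1} provides a tighter estimate on how far apart two bicorns sitting consecutively in the sequence can be in $\mathscr{C}(F)$ in terms of their intersection number, which is exactly what is required to turn the 17 of Hensel--Przytycki--Webb into a 15.

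The main obstacle will be the quantitative bookkeeping in the last step: each use of the bicorn slimness lemma and each transition between a bicorn path and a geodesic introduces small constants, and one must verify that the sum of these constants — with the new logarithmic input of Theorem~\ref{log_upp_1} substituted in — does not exceed 15. A secondary technical point is handling the non-orientable case: the bicorn definition must be made so that the resulting curve is genuinely essential and two-sided or, when one-sided, can still be used as a vertex of $\mathscr{C}(F)$, and the surgery in the slimness lemma must be verified to preserve these properties on $F$.
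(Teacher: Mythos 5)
Your skeleton—bicorn curves, a slimness lemma for triples, and a passage from slim bicorn paths to thin geodesic triangles—matches the paper's, but two of your key claims do not hold as stated. First, the $1$-slimness claim is too strong. The outermost-intersection surgery you describe produces, as in Przytycki--Sisto and in Lemma \ref{slimbicorn}, a bicorn curve $\gamma'$ of $(\alpha,\delta)$ or $(\beta,\delta)$ with $i(c,\gamma')\leq 2$, not $i(c,\gamma')=0$: the new curve can still meet $c$ once near each of the two corners. On a closed surface with $\chi(F)\leq -2$ this yields only $d_F(c,\gamma')\leq 2$ (via Lemma \ref{start_log}), i.e.\ $2$-slimness in distance; the closed-surface hypothesis is what makes the bicorns essential and converts ``intersection $\leq 2$'' into ``distance $\leq 2$'', but it does not upgrade the surgery to disjointness. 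Relatedly, consecutive curves in a bicorn sequence intersect at most once rather than being disjoint, so the sequence is not an edge-path in $\mathscr{C}(F)$.

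Second, and more seriously, the final quantitative step is where the constant $15$ actually lives, and your proposal defers it to ``a classical argument'' with $\delta$ linear in the slimness constant plus bookkeeping. That route does not give $15$: Remark \ref{r_haus} notes that running Bowditch's criterion through $G_2(F)$ yields a Hausdorff bound of $34$. The paper instead (i) proves directly that every $(\alpha,\beta)$-bicorn sequence lies in the $13$-neighborhood of any geodesic from $\alpha$ to $\beta$ (Proposition \ref{12ball_prop}), by taking the farthest bicorn $b^{*}$ at distance $k$, trapping it between nearby bicorns in the sequence, applying Lemma \ref{dist_bc} to the resulting path of length at most $8k+2$, and deriving $k\leq 2\lceil\log_2(8k+2)\rceil-1$, whence $k\leq 13$; and (ii) uses Proposition \ref{slimbicorn_2} to produce bicorn curves $\eta$, $\varepsilon$, $\theta$ subordinate to the three pairs of apices with pairwise distance at most $2$, so that $\eta$ is within $13+2=15$ of all three sides. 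Note also that Theorem \ref{log_upp_1} plays no role here—the improvement from $17$ to $15$ comes from the explicit center construction and the constant $13$ in step (i), not from the sharpened logarithmic distance bound. To complete your argument you would need to supply both (i) and (ii) explicitly.
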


To the best of the authors' knowledge the above hyperbolic constant for $\mathscr{C}(F)$ is optimal among the available estimates. 

The bounded geodesic image theorem due to Masur--Minsky \cite{MM00} is a consequence of the $\delta$-hyperbolicity of the curve graphs. 
It states that, for any surface $F$ and its subsurface $F'$, the subsurface projection of every geodesic in $\mathscr{C}(F)$ whose all members cut $F'$ has finite diameter in $\mathscr{C}(F')$ (and the upper bound only depends on the surface type of $F$). 
For orientable $F$, explicit and uniform subsurface projection bounds independent from the surface type of $F$ were given by Webb \cite{W14} and Jin \cite{Jin20}. 
However, no explicit subsurface projection bound is known for non-orientable surfaces. 
We will prove the bounded geodesic image theorem for closed surfaces including non-orientable ones with effective bounds. 

\begin{theorem}\label{bgit}
Suppose that $F'$ is a subsurface of $F$, which is not homeomorphic to a M\"obius band. 
Then for every geodesic $\mathcal{G}$ in $\mathscr{C}(F)$ whose all members cut $F'$, the following (1) and (2) hold. 
\begin{enumerate}
 \item[(1)]  $\mathrm{diam}_{F'}(\pi_{F'}(\mathcal{G})) \leq 32$ if $F'$ is non-annular. 
 \item[(2)]  $\mathrm{diam}_{F'}(\pi_{F'}(\mathcal{G})) \leq 52$ if $F'$ is annular.  
\end{enumerate}
\end{theorem}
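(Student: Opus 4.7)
The plan is to leverage the bicorn curve machinery together with Theorem~\ref{hyp} ($15$-hyperbolicity) and Theorem~\ref{log_upp_1} (the refined intersection-to-distance bound), following the general Masur--Minsky strategy but extracting effective constants in the style of Webb and Jin.

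\emph{Setup and reduction.} Write $\mathcal{G}=(\gamma_0,\ldots,\gamma_n)$ with every $\gamma_i$ cutting $F'$. Because adjacent vertices $\gamma_i,\gamma_{i+1}$ are disjoint on $F$, a standard elementary projection lemma yields $d_{F'}(\pi_{F'}(\gamma_i),\pi_{F'}(\gamma_{i+1}))\leq C_0$ for a small constant $C_0$ (slightly larger in the annular case). Hence it suffices to bound $d_{F'}(\pi_{F'}(\gamma_i),\pi_{F'}(\gamma_j))$ uniformly in $|i-j|$ for the pair realizing the diameter of $\pi_{F'}(\mathcal{G})$.

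\emph{Dichotomy via bicorns.} Between $\gamma_i$ and $\gamma_j$, the bicorn construction from earlier in the paper produces a family of simple closed curves on $F$ with tightly controlled intersection against both endpoints. I would then split into two cases. If some bicorn $c$ is disjoint from $F'$, then $c$ does not cut $F'$ whereas every vertex of $\mathcal{G}$ does; by Theorem~\ref{hyp}, $c$ must lie within $15$ of $\mathcal{G}$, and projecting a connecting edge-path of length $\leq 15$ into $\mathscr{C}(F')$ bounds $d_{F'}(\pi_{F'}(\gamma_i),\pi_{F'}(\gamma_j))$ in terms of $15$ and $C_0$. Otherwise every bicorn cuts $F'$, so the bicorn family itself projects into $\mathscr{C}(F')$; the stepwise intersection control (combined with Theorem~\ref{log_upp_1} to bound the length of the bicorn path in $\mathscr{C}(F)$) then yields a direct estimate on $d_{F'}(\pi_{F'}(\gamma_i),\pi_{F'}(\gamma_j))$. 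Optimizing the constants in both alternatives produces $32$ for non-annular $F'$ and $52$ for annular $F'$.

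\emph{Main obstacle.} The principal difficulty is the annular case: annular subsurface projection is defined in the cyclic cover and is sensitive to the winding of arcs around the core, so bounded intersection on $F$ does not in itself imply bounded projection distance in $F'$. Pushing the bicorn argument through therefore demands a separate, more delicate intersection-versus-winding count on the bicorn arcs, and this is what inflates the constant from $32$ to $52$. A secondary but essential issue is that the bicorn construction and the projection lemma must be adapted to one-sided curves so that the theorem applies uniformly in the non-orientable setting; the Möbius-band case is excluded because subsurface projection is not well behaved there.
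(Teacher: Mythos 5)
Your proposal does not close, and the gaps are in exactly the places where the real content of the theorem lives. In your first branch (some bicorn $c$ disjoint from $F'$), you propose to ``project a connecting edge-path of length $\leq 15$ into $\mathscr{C}(F')$.'' This is not a legitimate move: the vertices of that connecting path are arbitrary curves on $F$ and need not cut $F'$, so their projections may be empty --- this is precisely the failure mode that the bounded geodesic image theorem is designed to circumvent, and the hypothesis only guarantees that the vertices of $\mathcal{G}$ itself cut $F'$. (Also $\pi_{F'}(c)=\emptyset$ in this branch, so there is nothing to compare $\pi_{F'}(\gamma_i)$ and $\pi_{F'}(\gamma_j)$ against.) In your second branch, a ``stepwise'' estimate along the bicorn sequence accumulates a contribution at every step, so it produces a bound proportional to the length of the bicorn sequence, i.e.\ roughly $\log i(\gamma_i,\gamma_j)$; that is Hempel-type behaviour, not a uniform constant. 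Merely knowing that every bicorn cuts $F'$ is too weak: the whole point is to prevent the projection from drifting as the sequence gets long.

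The idea you are missing is to stratify by distance to $\beta=\partial F'$ in $\mathscr{C}(F)$ rather than by whether curves cut $F'$. On the portion of $\mathcal{G}$ whose vertices satisfy $d_F(\cdot,\beta)\geq 18$, every bicorn between two such vertices lies in the $13$-neighborhood of $\mathcal{G}$ (Proposition \ref{12ball_prop}), hence is at distance $\geq 5$ from $\beta$, hence intersects $\beta$ at least $9$ times by Proposition \ref{8to4}; consequently one of its two arcs meets $\beta$ at least thrice and contributes a proper arc to $\pi_{F'}$ that is \emph{shared} with $\pi_{F'}(\gamma_1)$ or with $\pi_{F'}(\gamma_2)$ (Lemma \ref{bc_proj}). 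One then locates a \emph{single} consecutive pair of bicorns where this sharing switches from the $\gamma_1$-side to the $\gamma_2$-side and surgers their arcs, giving $d_{F'}(\pi_{F'}(\gamma_1),\pi_{F'}(\gamma_2))\leq 3$ \emph{independently} of $|i-j|$ (Lemma \ref{bgit_far_away}). The part of $\mathcal{G}$ within distance $18$ of $\beta$ has length at most $36$ because $\mathcal{G}$ is a geodesic, and only there does one pay a per-edge Lipschitz cost; assembling the two regimes yields $32$. For the annular case you correctly name the winding problem but supply no mechanism; the paper resolves it by passing through a non-annular subsurface $Z\supset A$ and invoking Webb's comparison (Lemma \ref{Webb_lemma}) on a short chain $\gamma_1,b_1,\eta,b_2,\gamma_2$ of curves with controlled pairwise intersection, which is where $52$ comes from. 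Neither $15$-hyperbolicity (Theorem \ref{hyp}) nor the dichotomy you propose plays a role in the actual argument.
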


For the definition of the subsurface projection $\pi_{F'}$, see Sections \ref{non-annular} and \ref{annular}. 

The {\it $k$-th augmented curve graph} $G_k (F)$ of $F$ is the graph such that vertices are the same as the curve graph, and that the edges are spanned by two vertices which have representatives intersecting at most $k$. 
Note that $G_{0} (F)$ is the ordinary curve graph $\mathscr{C}(F)$. 
The length metric can be introduced into $G_{k} (F)$ in the same way as for  $\mathscr{C}(F)$. 
For any $k \geq 0$, $G_{k}(F)$ is not 0-hyperbolic, because it contains a triple of pairwise adjacent vertices. 
From $\delta$-hyperbolicity of the curve graphs, it follows that $G_{k}(F)$ is $\delta'$-hyperbolic for some $\delta'$. 
As a natural consequence of the properties of bicorn curves, we prove the following. 

\begin{theorem}\label{aug_hyp}
For any $k \geq 2$, the augmented curve graph $G_k (F)$ is $8$-hyperbolic. 
\end{theorem}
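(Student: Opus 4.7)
The plan is to push the bicorn/unicorn machinery (which the earlier sections of the paper have developed in order to prove Theorems \ref{log_upp_1} and \ref{hyp}) through to the augmented graph $G_k(F)$. The extra ingredient beyond the $\mathscr{C}(F)$ case is the standard Hensel--Przytycki--Webb fact that any two consecutive bicorn curves on a unicorn path between $\alpha$ and $\beta$ intersect at most twice; hence as soon as $k \geq 2$ they are joined by an edge of $G_k(F)$.

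First I would record that, because of the observation above, a unicorn path $P(\alpha,\beta)$ between essential curves $\alpha,\beta$ is literally an edge-path of $G_k(F)$ when $k \geq 2$, and that consequently $P(\alpha,\beta)$ is an unparametrised quasigeodesic in $G_k(F)$ with small additive error. This step is almost free once the bicorn theory is in place, and it reduces the problem to a triangle-thinness statement for \emph{unicorn triangles} rather than for arbitrary geodesic triangles.

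Next, given a geodesic triangle in $G_k(F)$ with vertices $\alpha,\beta,\gamma$, I would replace its three sides by unicorn paths $P_{\alpha\beta}, P_{\beta\gamma}, P_{\gamma\alpha}$ and apply the HPW-style structural lemma asserting that every vertex of one unicorn side lies within a bounded number of bicorn-steps of the union of the other two sides. Since each bicorn-step is an edge of $G_k(F)$ when $k \geq 2$, this combinatorial bound translates directly into a $G_k(F)$-distance bound, producing an explicit candidate centre for the triangle.

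The main obstacle is the sharp bookkeeping required to obtain exactly the constant $8$. The slack comes from three independent sources: the additive error between a $G_k$-geodesic and the unicorn path with the same endpoints; the quantitative thin-triangle bound for unicorn triangles; and the accumulation of these errors across all three sides when one converts a ``centre of the unicorn triangle'' into a $\delta$-centre of the original geodesic triangle. The mechanism by which the paper's constant $15$ for $\mathscr{C}(F)$ drops to $8$ for $G_k(F)$ is precisely that each bicorn-step, which costs a bounded but positive amount in $\mathscr{C}(F)$, now costs exactly $1$ in $G_k(F)$; verifying that this saving aggregates to the claimed bound $8$ is the only delicate part of the argument.
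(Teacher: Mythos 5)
Your outline follows the same route as the paper: bicorn (``unicorn'') sequences become genuine edge-paths in $G_k(F)$ once $k\ge 2$, triangles of bicorn curves are uniformly slim, and a centre of the bicorn triangle is promoted to a $\delta$-centre of the geodesic triangle. But as written the proposal has a genuine gap: every quantitative step is deferred, and the quantitative steps \emph{are} the proof. You never establish the key fact that an $(\alpha,\beta)$-bicorn sequence lies in a bounded neighbourhood of a $G_k$-geodesic with the same endpoints, nor the value of that bound. The paper gets this (Lemma 6.3) by redoing the logarithmic retraction argument inside $G_k(F)$: the analogue of Lemma \ref{dist_bc} gives $d_G(b,x_l)\le\lceil\log_2 m\rceil+1$ (using Lemma \ref{1-3rd} to see that $i(b,x_1)\le 4$ forces $d_G(b,x_1)\le 2$), which feeds into the self-referential inequality $t\le\lceil\log_2(8t)\rceil+1$ and yields the neighbourhood constant $7$. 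Combining with Proposition \ref{slimbicorn_2} --- which produces bicorn curves $\eta,\varepsilon,\theta$ on the three pairs of apices with pairwise intersection at most $2$, hence at mutual $G_k$-distance $1$ --- gives the centre at distance $7+1=8$ from each side. None of this arithmetic, nor the intermediate lemmas it rests on, appears in your proposal; ``the slack comes from three independent sources'' is a description of the problem, not a solution to it.

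A second, more local issue: you assert that the bicorn path is ``an unparametrised quasigeodesic in $G_k(F)$ with small additive error.'' The paper explicitly warns that bicorn sequences are \emph{not} uniform quasi-geodesics (there are arbitrarily long bicorn sequences between curves at distance $2$), so this claim is false as stated and cannot be the reduction step. What is true, and what the argument actually needs, is the one-sided containment of the bicorn sequence in a bounded neighbourhood of the geodesic, proved by the Masur--Minsky/Hensel--Przytycki--Webb contraction argument rather than by any quasi-geodesity. Also note that in the paper's construction consecutive bicorn curves in a sequence intersect at most \emph{once} (property (II) of Lemma \ref{bicorn_path_exists_2}); the hypothesis $k\ge 2$ is needed not for the path to exist in $G_k(F)$ but for the slimness step, where the curves produced by Proposition \ref{slimbicorn_2} intersect at most twice.
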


For recent works on the augmented curve graphs, see e.g.\ \cite{AG22}, \cite{BN20}, \cite{G19}, \cite{P15}. 

This paper is structured as follows. 
In Section \ref{tbc}, we develop the theory of bicorn curves on closed orientable or non-orientable surfaces. 
Theorems \ref{log_upp_1}, \ref{hyp} and some results concerning bicorn curves and distance in the curve graphs will be proved in Section \ref{IND}. 
Sections \ref{non-annular} and \ref{annular} are devoted to proving Theorem \ref{bgit}. 
In the appendix we will prove Theorem \ref{aug_hyp}. 

%%%%%%The theory of bicorn curves%%%%%%%%%
\section{The bicorn curves} \label{tbc}

Inspired by the proof in \cite{HPW15}, Przytycki--Sisto \cite{PS17} used bicorn curves on closed surfaces to give a short proof of hyperbolicity of the curve graphs. 
Bicorn curves are also used in similar contexts by Rasmussen \cite{R20} and Jin \cite{Jin20}. 
Following Przytycki--Sisto, we define bicorn curves. 

\begin{definition}
Let $\alpha$ and $\beta$ be essential simple closed curves on $F$. 
Suppose that $\alpha$ and $\beta$ are in minimal position. 
An {\it $(\alpha, \beta)$-bicorn curve} $\gamma$ is a simple closed curve such that $\gamma = a \cup b$ for some 
arcs $a \subset \alpha$ and $b \subset \beta$ with $\partial a = \partial b$. 
Here, exactly one of $a$ or $b$ can be $\emptyset$, and hence both of the entire curves $\alpha$ and $\beta$ can be seen as bicorn curves between themselves. 
We call $a$ (resp.\ $b$) an {\it $\alpha$-arc} (resp. {\it $\beta$-arc}). 
Any bicorn curve is essential, because $F$ is closed. 
Also, for any pair of essential simple closed curves on $F$, only finitely many bicorn curves exist. 

An {\it arc} on a surface is an embedded closed interval in the surface. 
A {\it proper arc} on a surface with boundary is an arc whose endpoints lie in the boundary of the surface. 
If $a$ is an arc, $\partial a$ denotes the set of the endpoints of $a$, and $\mathrm{Int}(a)$ denotes $a - \partial a$. 
\end{definition}

\begin{example}
Given essential simple closed curves $\alpha$ and $\beta$ on a surface $F$ which are in minimal position, we can construct an $(\alpha, \beta)$-bicorn curve. 
Consider the cyclic graph $\Gamma_{\alpha}(\beta)$ whose geometric realization is $\beta$, and the vertices are the points of $\alpha \cap \beta$. 
Pick an edge $e$ of $\Gamma_{\alpha}(\beta)$. 
Then $e$ is a minimal arc of $\beta$ such that $e \subset \beta$ and $\partial e \subset \alpha \cap \beta$. 
Note that $e$ determines two arcs $a, a'$ of $\alpha$ with $\partial a = \partial a' = \partial e$. 
Both of $a \cup e$ and $a' \cup e$ are simple by minimality of $e$. 
Thus $a \cup e$ and $a' \cup e$ are $(\alpha, \beta)$-bicorn curves. 
Similarly, each edge of $\Gamma_{\beta}(\alpha)$ gives exactly two $(\alpha, \beta)$-bicorn curves containing the edge. 
\end{example}

We start with the following lemma.  

\begin{lemma}\label{1-3rd}
Let $\alpha$ and $\beta$ be essential simple closed curves on $F$ which are in minimal position. 
If $i(\alpha, \beta) \geq 3$, then there exists an $(\alpha, \beta)$-bicorn curve $\gamma$ such that $i(\alpha, \gamma) \leq 2$ and $i(\beta, \gamma) \leq \frac{i(\alpha, \beta)}{3}$. 
\end{lemma}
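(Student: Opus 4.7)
I would construct $\gamma$ as an $(\alpha,\beta)$-bicorn $\gamma=a\cup b$ where $b$ is a single edge of the cyclic graph $\Gamma_{\alpha}(\beta)$ (or possibly a short union of consecutive edges) and $a$ is a carefully chosen $\alpha$-arc joining the two endpoints of $b$. The starting point is the following general principle, which I would establish first: for any bicorn $\gamma = a\cup b$, suitable perturbations off $\alpha$ and off $\beta$ into complementary regions of $\alpha\cup\beta$ give
\[
 i(\alpha,\gamma)\le |\mathrm{Int}(b)\cap\alpha|+c_{\alpha},\qquad
 i(\beta,\gamma)\le |\mathrm{Int}(a)\cap\beta|+c_{\beta},
\]
where the corner contributions $c_{\alpha},c_{\beta}\in\{0,1,2\}$ come from a local smoothing analysis at the two points $\partial a=\partial b$.

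With $b$ chosen to be a single edge of $\Gamma_{\alpha}(\beta)$, one has $\mathrm{Int}(b)\cap\alpha=\emptyset$, so the first inequality $i(\alpha,\gamma)\le 2$ is automatic from the general bounds above. The content of the lemma therefore reduces to choosing the edge $b$ and the $\alpha$-arc $a$ so that $|\mathrm{Int}(a)\cap\beta|+c_{\beta}\le i(\alpha,\beta)/3$. For each edge $e$ of $\Gamma_{\alpha}(\beta)$, the two $\alpha$-arcs joining $\partial e$ partition the remaining $i(\alpha,\beta)-2$ points of $\alpha\cap\beta$ between their interiors, so the shorter of the two arcs has at most $(i(\alpha,\beta)-2)/2$ such points. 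A plain averaging over the $n:=i(\alpha,\beta)$ edges thus yields only the $1/2$-factor bound, which is strictly weaker than what the lemma asks.

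The main obstacle, and the heart of the argument, is to improve this naive $1/2$ to the claimed $1/3$. My plan is to enlarge the class of candidate bicorns by allowing $b$ to be a union of two or three consecutive edges of $\Gamma_{\alpha}(\beta)$. This keeps $|\mathrm{Int}(b)\cap\alpha|\le 2$ and, after the corner-smoothing analysis of the first step, still yields $i(\alpha,\gamma)\le 2$. With this enlarged pool, I would run a trisection-style pigeonhole on the cyclic arrangement of the $n$ points of $\alpha\cap\beta$ on $\alpha$ induced by $\beta$'s Hamiltonian traversal: the combinatorial claim to prove is that in any such Hamiltonian cycle on $n$ cyclically ordered points, there is always either a single edge or a short run of consecutive edges whose endpoints are close enough on $\alpha$ to force the shorter $\alpha$-arc between them to contain at most $n/3-c_{\beta}$ interior marked points. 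Establishing this refined combinatorial statement, which exploits the cyclic structure of $\beta$'s traversal of $\alpha\cap\beta$ rather than just local edge data, is where the real work lies.
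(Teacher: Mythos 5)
Your setup is on the right track: taking $b$ to be one or two consecutive edges of $\Gamma_{\alpha}(\beta)$ so that $i(\alpha,\gamma)\le 2$ is essentially automatic, and reducing the lemma to controlling $i(\beta,\gamma)$, is exactly how the paper proceeds. But the proposal stops at the decisive point. You correctly observe that averaging over single edges only yields the factor $\frac{1}{2}$, and you then defer the improvement to $\frac{1}{3}$ to an unproved ``trisection-style'' combinatorial claim about the Hamiltonian traversal, explicitly flagging it as ``where the real work lies.'' As written, that claim is not established, and it is also misdirected: no global analysis of the cyclic structure of $\beta$'s traversal is needed, and moreover your suggestion to allow $b$ to be \emph{three} consecutive edges is dangerous, since then $\mathrm{Int}(b)$ already meets $\alpha$ in two points and the corner contributions can push $i(\alpha,\gamma)$ above $2$.

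The missing idea is a purely local pigeonhole over \emph{three candidate curves built from the same two consecutive edges}. Fix any two consecutive edges $e_1,e_2$ of $\Gamma_{\alpha}(\beta)$, with vertices $v_0,v_1,v_2$ (so $\partial e_1=\{v_0,v_1\}$, $\partial e_2=\{v_1,v_2\}$). Set $b_1=e_1$, $b_2=e_2$, $b_3=e_1\cup e_2$, and let $a_i$ be the $\alpha$-arc with $\partial a_i=\partial b_i$ whose interior avoids all of $v_0,v_1,v_2$. These three arcs are precisely the closures of the three components of $\alpha-\{v_0,v_1,v_2\}$, so their interiors partition the remaining $i(\alpha,\beta)-3$ points of $\alpha\cap\beta$, giving $\sum_{i=1}^{3}\#(\mathrm{Int}(a_i)\cap\beta)\le i(\alpha,\beta)-3$. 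Each $\gamma_i=a_i\cup b_i$ satisfies $i(\alpha,\gamma_i)\le 2$ (the three intersections of $e_1\cup e_2$ with $\alpha$ reduce to two after an isotopy), and a representative of $\gamma_i$ meets $\beta$ at most once in a regular neighborhood of $e_1\cup e_2$, so $\sum_{i=1}^{3} i(\beta,\gamma_i)\le i(\alpha,\beta)$. Pigeonhole then hands you one $\gamma_i$ with $i(\beta,\gamma_i)\le \frac{i(\alpha,\beta)}{3}$. The point you were missing is that the averaging should be done over these three nested candidates attached to a single pair of adjacent edges, not over all edges of $\Gamma_{\alpha}(\beta)$; once this is seen, the ``real work'' evaporates.
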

\begin{proof}
Suppose $i(\alpha, \beta) \geq 3$. 
It is enough to see that there exists three $(\alpha, \beta)$-bicorn curves $\gamma_1, \gamma_2, \gamma_3$ such that 
\begin{enumerate}
 \item[(1)] $i(\alpha, \gamma_i) \leq 2$ (for $i = 1,2,3$) and 
 \item[(2)] $\Sigma_{i=1}^{3}i(\beta, \gamma_i) \leq i(\alpha, \beta)$. 
\end{enumerate}
In order to obtain such bicorn curves, we consider the cyclic graph $\Gamma_{\alpha}(\beta)$. 
Pick two consecutive edges $e_1, e_2$ of $\Gamma_{\alpha}(\beta)$. 
Each of the arcs $b_1 = e_1$, $b_2 = e_2$ and $b_3 = e_1 \cup e_2$ determines an arc of $\alpha$, say $a_i$, whose endpoints are equal to $\partial b_i$, and $\mathrm{Int}(a_i)$ does not pass $e_1 \cup e_2$. 
Set $\gamma_{i} := a_{i} \cup b_{i}$ ($i = 1, 2, 3$). 
Then $\gamma_1, \gamma_2, \gamma_3$ are $(\alpha, \beta)$-bicorn curves. 
Since $e_1 \cup e_2$ intersects $\alpha$ thrice and since one of the intersection points can be avoided by an isotopy, it follows that $i(\alpha , \gamma_i) \leq 2$. 
Hence, $\gamma_i$ satisfies the above condition (1) (for $i=1,2,3$). 
Since $\sum_{i=1}^{3} \# (\mathrm{Int}(a_i), \beta) \leq i(\alpha, \beta) - 3$, and since a representative of $\gamma_i$ intersects $\beta$ at most once in the regular neighborhood of $e_1 \cup e_2$, and so $\gamma_1, \gamma_2, \gamma_3$ satisfy the above condition (2). 
\end{proof}

It is easy to see that any two essential simple closed curves $\alpha$ and $\beta$ intersecting twice has a bicorn curve $\gamma = a \cup b$ such that $i(\alpha, \gamma) \leq 1$ and $i(\beta, \gamma) \leq 1$. 
Here, $a$ is an $\alpha$-arc and $b$ is a $\beta$-arc. 
Moreover, the sequence of the bicorn curves, $\alpha, \gamma, \beta$, has an obvious property that $\alpha \supset a \supset \emptyset$ and $\emptyset \subset b \subset \beta$, when we regard the curves as $\alpha = \alpha \cup \emptyset$, $\gamma = a \cup b$ and $\beta = \emptyset \cup \beta$. 
The following lemma generalizes this toy case. 

\begin{lemma}\label{bicorn_path_exists_2}
Let $\alpha$ and $\beta$ be essential simple closed curves on $F$ with $i (\alpha, \beta) \geq 3$ which are in minimal position. 
Then there exists a sequence of bicorn curves $\alpha = \gamma_1, \gamma_2, \ldots, \gamma_{n-1}, \gamma_n = \beta$ satisfying the following properties (I) and (II):
\begin{enumerate}
 \item[(I)] $\alpha \supset a_2 \supset \cdots \supset a_{i-1} \supset a_i \supset \cdots \supset a_{n-1} \supset \emptyset$, and that $\emptyset \subset b_2 \subset \cdots \subset b_{i-1} \subset b_{i} \subset \cdots \subset \beta$, 
 where the $a_i$ is an $\alpha$-arc and $b_i$ is a $\beta$-arc of $\gamma_i$.  
 \item[(II)] $i(\gamma_{i-1}, \gamma_{i}) \leq 1$ for each $i \geq 2$. 
\end{enumerate}
Consequently, for all $1 \leq i \leq n$ and for all $j \leq i \leq k$, it follows that $\gamma_i$ is a $(\gamma_j, \gamma_k)$-bicorn curve. 
\end{lemma}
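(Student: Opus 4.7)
The plan is to construct the sequence $\gamma_1, \gamma_2, \ldots, \gamma_n$ iteratively, starting from $\gamma_1 = \alpha$ and at each stage passing from $\gamma_i = a_i \cup b_i$ to $\gamma_{i+1}$ by extending the $\beta$-arc $b_i$ along $\beta$ just enough so that the new endpoint lands on $a_i$, while shrinking the $\alpha$-arc correspondingly. For the initial step, pick any edge $e$ of $\Gamma_{\alpha}(\beta)$, set $b_2 = e$, and let $a_2$ be either of the two $\alpha$-arcs with $\partial a_2 = \partial e$. For $i \ge 2$ with $a_i \ne \emptyset$, write $\partial b_i = \{p, q\}$, let $b_i^{\ast} = \beta \setminus \mathrm{Int}(b_i)$ be the complementary sub-arc of $\beta$, and enumerate the intersections of $\alpha$ along $b_i^{\ast}$ starting from $p$: $p = p_0, p_1, \ldots, p_m = q$. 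Let $k \ge 1$ be the smallest index such that $p_k \in a_i$ (this exists because $p_m = q \in \partial a_i$); set $b_{i+1} = b_i \cup [p_0, p_k]_{b_i^{\ast}}$ and define $a_{i+1}$ to be the sub-arc of $a_i$ from $p_k$ to $q$. When $k = m$, this produces $\gamma_{i+1} = \beta$ and the construction terminates.

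Property (I) is built into the construction: $a_{i+1} \subsetneq a_i$ (since $p \ne p_k$) and $b_{i+1} \supsetneq b_i$. The new bicorn $\gamma_{i+1}$ is simple because the ``skipped'' intersection points $p_1, \ldots, p_{k-1}$ all lie in the complementary $\alpha$-arc of $a_i$ by the minimality of $k$, and therefore not in $a_{i+1} \subset a_i$. The process terminates in finitely many steps because the number of edges of $\Gamma_{\beta}(\alpha)$ contained in $a_i$ is strictly decreasing.

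For property (II), the key observation is that $\gamma_{i+1}$ is obtained from $\gamma_i$ by a single arc surgery: the arc $A := b_{i+1} \setminus \mathrm{Int}(b_i)$ is a sub-arc of $\beta$ with endpoints $p, p_k$ on $\gamma_i$, and its interior is disjoint from $\gamma_i$ (the ``skipped'' points $p_1, \ldots, p_{k-1}$ lie on $\alpha$ but not on $a_i$, and $A$ is contained in $\beta \setminus b_i$). The curve $\gamma_{i+1}$ is then the result of replacing the sub-arc $a_i \setminus \mathrm{Int}(a_{i+1})$ of $\gamma_i$ by $A$. A standard surgery argument then bounds $i(\gamma_i, \gamma_{i+1}) \le 1$: pushing the shared portion of $\gamma_{i+1}$ slightly off $\gamma_i$ to the side on which $A$ leaves $\gamma_i$ at one of its endpoints, the resulting transverse intersections can occur only near $p$ and $p_k$, and at least one of these two points gives no crossing, leaving at most one. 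I expect the careful side-choice bookkeeping in this surgery estimate to be the main technical point.

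Finally, the ``consequently'' clause is an immediate consequence of (I): for $j \le i \le k$, the inclusions $a_i \subset a_j \subset \gamma_j$ and $b_i \subset b_k \subset \gamma_k$ together with $\partial a_i = \partial b_i$ exhibit the decomposition $\gamma_i = a_i \cup b_i$ as that of a $(\gamma_j, \gamma_k)$-bicorn.
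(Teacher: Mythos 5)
Your proposal is correct and follows essentially the same strategy as the paper: iteratively extend the $\beta$-arc along $\beta$ past the intersection points not lying on the current $\alpha$-arc until reaching the first one that does, shrink the $\alpha$-arc accordingly, and observe that consecutive bicorn curves share an arc (here $b_i \cup a_{i+1}$), which yields $i(\gamma_i,\gamma_{i+1}) \leq 1$. Your uniform treatment of every step (including the justification of simplicity via minimality of $k$ and the termination count) matches the paper's argument, which merely phrases the extension in terms of shortest edge-paths in the cyclic graph $\Gamma_{\alpha}(\beta)$.
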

\begin{proof}
We construct the desired sequence inductively. 
Set $\gamma_1 := \alpha$. 
Pick an edge $e$ of $\Gamma_{\alpha} (\beta)$. 
Then $e$ determines two arcs $a, a'$ of $\alpha$ such that $a \cup a' = \alpha$, $\partial a = \partial a' = \partial e$ and $\mathrm{Int}(a) \cap \mathrm{Int} (e) = \mathrm{Int}(a') \cap \mathrm{Int} (e)= \emptyset$. 
We may assume that $a$ has the property that $\mathrm{Int}(a)$ passes thorough a vertex $v$ of $\Gamma_{\alpha} (\beta)$ which is exactly 1 away from $\partial e$ in distance. 
Set $\gamma_2:= a_2 \cup b_2$, where $a_2 = a$ and $b_2 = e$. 
Since $\gamma_1 \cap \gamma_2 = a_2$, it follows that $i(\gamma_1, \gamma_2) \leq 1$. 

Set $b_{3} := b_{2} \cup e_{v}$, where $e_{v}$ is the edge connecting $v$ and $b_{2}$. 
Since $a_2$ has the property noted above, there exist an arc $a_{3} \subset a_{2}$ such that $a_{3}$ satisfies $\partial a_{3} = \partial b_{3}$
We set $\gamma_3: = a_3 \cup b_{3}$. 
Since $\gamma_2 \cap \gamma_3$ is an arc $b_2 \cup a_3$, we have $i (\gamma_2, \gamma_3) \leq 1$. 
It is possible that $a_3$ no longer has the property similar to $a_2$. 
We will define $\gamma_4$ depending on whether $\mathrm{Int} (a_3)$ intersects $\beta - b_3$ or not. 
If $\mathrm{Int}(a_3)$ avoids $\beta - b_3$, then we have $\gamma_3 \cap \beta = b_3$ and hence $\gamma_1, \gamma_2, \gamma_3, \beta$ is the desired $(\alpha, \beta)$-bicorn sequence. 
So we may assume that $\mathrm{Int}(a_3)$ intersects $\beta - b_3$ at least once. 
Pick a vertex $v'$ of $a_3 \cap (\Gamma_{\alpha}(\beta) - b_3)$ so that $v'$ is most nearest to $b_3$ in distance. 
Let $e_1, e_2, \ldots, e_m$ be the shortest edge-path starting at $\partial b_3$ and ending at $v'$. 
Set $b_4 : = b_3 \cup e_1 \cup \cdots \cup e_m$. 
Then $b_4$ together with $a_3$ determines an arc $a_4 \subset a_3$ such that $\partial a_4 = \partial b_4$. 
Set $\gamma_4:= a_4 \cup b_4$. 
It follows that $i (\gamma_3, \gamma_4) \leq 1$, because $\gamma_3 \cap \gamma_4$ is an arc $b_3 \cup a_4$. 

Similarly, we can construct an $(\alpha, \beta)$-bicorn curve $\gamma_i$ from $\gamma_{i-1}$. 
The above process ends after a finite number of steps, because $i(\beta, \gamma_i) \leq i(\beta, \gamma_{i-1}) - 1$. 
For all $1 \leq i \leq n$ and $j \leq i \leq k$, we have that $\gamma_i$ is a $(\gamma_j, \gamma_k)$-bicorn curve, because $ a_{j} \supset a_i$ and $b_{i} \subset b_{k}$. 
\end{proof}

The sequence of $(\alpha, \beta)$-bicorn curves obtained in Lemma \ref{bicorn_path_exists_2} is called an {\it $(\alpha, \beta)$-bicorn sequence}. 
We note that a bicorn sequence is not an actual path in the curve graph. 
Furthermore, it is not a quasi-geodesic of a uniform parameter, because there exist bicorn sequenses of arbitrary length between two essential simple closed curves with distance $2$. 
For every $(\alpha, \beta)$-bicorn curve $\gamma$, we can construct an $(\alpha, \beta)$-bicorn sequence containing $\gamma$ as follows. 

\begin{lemma}\label{bc_contain}
Let $\alpha$ and $\beta$ be essential simple closed curves on $F$ with $i (\alpha, \beta) \geq 3$ which are in minimal position. 
Suppose that $\gamma$ is an $(\alpha, \beta)$-bicorn curve. 
Then there exists an $(\alpha, \beta)$-bicorn sequence $\mathcal{B}$ such that $\gamma \in \mathcal{B}$. 
\end{lemma}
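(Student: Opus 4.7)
My plan is to construct the desired sequence in two halves meeting at $\gamma$: a first half $\alpha = \gamma_{1}, \ldots, \gamma_{\ell} = \gamma$ in which every $\beta$-arc $b_{i}$ is contained in $b$ and every $\alpha$-arc $a_{i}$ contains $a$, and a second half $\gamma = \gamma_{\ell}, \ldots, \gamma_{n} = \beta$ in which every $\beta$-arc $b_{i}$ contains $b$ and every $\alpha$-arc $a_{i}$ is contained in $a$. Concatenation produces a sequence in which the $a_{i}$ monotonically shrink from $\alpha$ to $\emptyset$ (passing through $a$) and the $b_{i}$ monotonically grow from $\emptyset$ to $\beta$ (passing through $b$), so properties (I) and (II) from Lemma \ref{bicorn_path_exists_2} hold across the splice. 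Write $\gamma = a \cup b$ with $\partial a = \partial b = \{p, q\}$; the extreme cases $\gamma \in \{\alpha, \beta\}$ reduce directly to Lemma \ref{bicorn_path_exists_2}, so I assume both $a$ and $b$ are non-empty proper arcs.

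For the first half, I mimic the inductive procedure in the proof of Lemma \ref{bicorn_path_exists_2}, but choose the extension direction and sub-arcs so as to stay ``inside $b$ and above $a$''. Starting from $\gamma_{1} = \alpha$, I let $b_{2}$ be the edge of $\Gamma_{\alpha}(\beta)$ contained in $b$ and incident to $p$, and I let $a_{2}$ be the arc of $\alpha$ with $\partial a_{2} = \partial b_{2}$ that contains $a$. Inductively, given $\gamma_{i} = a_{i} \cup b_{i}$ with $b_{i} \subsetneq b$ and $a \subset a_{i}$, I take $v$ to be the vertex of $b \cap a_{i}$ outside $b_{i}$ nearest to $b_{i}$ along $b$, form $b_{i+1}$ by adjoining to $b_{i}$ the sub-arc of $b$ from $b_{i}$ to $v$, and define $a_{i+1}$ to be the sub-arc of $a_{i}$ with $\partial a_{i+1} = \partial b_{i+1}$ which still contains $a$.

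The key observation making this legitimate is that because $\gamma = a \cup b$ is simple, $\mathrm{Int}(b) \cap a = \emptyset$, so every intersection point of $\mathrm{Int}(b)$ with $\alpha$ lies in $\alpha \setminus a \subset a_{i} \setminus a$. Hence passing from $a_{i}$ to $a_{i+1}$ only discards vertices outside $a$, preserving $a \subset a_{i+1}$. Simplicity of $\gamma_{i+1}$ and the inequality $i(\gamma_{i}, \gamma_{i+1}) \leq 1$ follow exactly as in Lemma \ref{bicorn_path_exists_2}, from the fact that $\gamma_{i} \cap \gamma_{i+1}$ contains the arc $b_{i} \cup a_{i+1}$ glued at the shared endpoint $p$. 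Since $q \in a \subset a_{i}$ is always an admissible choice of $v$, the process terminates in finitely many steps with $b_{\ell} = b$; and since $a_{\ell}$ is an arc of $\alpha$ with endpoints $\{p,q\}$ containing $a$, it must equal $a$, so $\gamma_{\ell} = \gamma$.

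The second half is the symmetric procedure: starting from $\gamma_{\ell} = \gamma$, extend $b_{i}$ along the complementary arc $\beta \setminus \mathrm{Int}(b)$ from the $p$-side, at each step jumping to the nearest vertex of $a_{i} \subset a$ along that arc and correspondingly shrinking $a_{i}$ within $a$; termination at $\gamma_{n} = \beta$ proceeds as in Lemma \ref{bicorn_path_exists_2}. The main anticipated obstacle is the routine but careful bookkeeping --- at each step one must verify that the chosen sub-arc of $\alpha$ really has the prescribed endpoints of $b_{i}$ and the correct containment with respect to $a$, using only the simplicity of $\gamma$ and the minimal position of $\alpha, \beta$ --- but these verifications parallel the corresponding ones in the proof of Lemma \ref{bicorn_path_exists_2}.
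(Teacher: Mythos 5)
Your proof is correct and takes essentially the same route as the paper: the paper also splices two half-sequences at $\gamma$, produced by running the extension/shortening procedure of Lemma \ref{bicorn_path_exists_2} from $\gamma$ toward $\beta$ and from $\gamma$ toward $\alpha$, and your first half is precisely the latter written in the $\alpha$-to-$\gamma$ orientation. You supply more of the bookkeeping (why the containments $a \subset a_i$, $b_i \subset b$ persist and why each splice point behaves), but the underlying construction is identical.
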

\begin{proof}
We use the argument in the proof of Lemma \ref{bicorn_path_exists_2}. 
By extending the $\beta$-arc and shortening the $\alpha$-arc of $\gamma$ repeatedly, we have a sequence of $(\alpha, \beta)$-bicorn curves $\gamma, \gamma_2, \ldots, \gamma_{n-1}, \gamma_{n} =\beta$ with the properties (I) and (II) in Lemma \ref{bicorn_path_exists_2}. 
Also, by extending the $\alpha$-arc and shortening the $\beta$-arc of $\gamma$ repeatedly, we have a sequence of bicorn curves $\gamma, \gamma_{-2}, \ldots, \gamma_{1-m}, \gamma_{-m} = \alpha$ with the properties (I) and (II). 
Then $\alpha = \gamma_{-m}, \ldots \gamma_{-2}, \gamma, \gamma_{2}, \ldots, \gamma_{n} =\beta$ is the desired sequence. 
\end{proof}

Three essential simple closed curves $\alpha, \beta, \delta$ is called {\it sensible} if $\alpha \cap \beta \cap \delta = \emptyset$. 
As Przytycki--Sisto proved in \cite[Lemma 2.6]{PS17}, any triple of bicorn curves  is slim in the following sense. 

\begin{lemma}\label{slimbicorn}
Let $\alpha, \beta, \delta$ be sensible essential simple closed curves on $F$ mutually in minimal position. 
Suppose that $\gamma = a \cup b$ is an $(\alpha, \beta)$-bicorn curve with $i(\gamma, \delta) \geq 3$, where $a$ (resp.\ $b$) is an $\alpha$-arc (resp.\ $\beta$-arc) of $\alpha$ (resp.\ $\beta$). 
Then there exists an essential simple closed curve $\gamma'$ such that $i(\gamma, \gamma') \leq 2$, and that $\gamma'$ is either an $(\alpha, \delta)$-bicorn curve whose $\alpha$-arc is contained in $\mathrm{Int}(a)$, or $(\beta, \delta)$-bicorn curve whose $\beta$-arc is contained in $\mathrm{Int}(b)$. 
\end{lemma}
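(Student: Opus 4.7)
My plan is to construct $\gamma'$ from a suitably short sub-arc of $\delta$ joined to a sub-arc of either $a$ or $b$. The key object will be an arc $c \subset \delta$ whose endpoints both lie on $a$ (or both on $b$) and whose interior meets $\gamma$ in at most one point; once such a $c$ is in hand, the bicorn $\gamma'$ is obtained by pairing $c$ with the sub-arc of $a$ (or $b$) between its endpoints.

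To locate $c$, note that $\#(\gamma \cap \delta) \geq i(\gamma, \delta) \geq 3$ and that sensibility forces each point of $\gamma \cap \delta$ to lie on exactly one of $a$ or $b$. Labelling the intersection points accordingly and reading the labels cyclically along $\delta$, I split into two cases. If two consecutive labels agree, I take $c$ to be the arc of $\delta \setminus \gamma$ joining them, so that $\mathrm{Int}(c) \cap \gamma = \emptyset$. Otherwise the labels strictly alternate, which forces $\#(\gamma \cap \delta)$ to be even and at least $4$; in this case I choose three consecutive intersection points $p, q, p'$ along $\delta$ with labels $a, b, a$ (the symmetric choice $b, a, b$ is also available) and take $c$ to be the arc of $\delta$ from $p$ to $p'$ passing through $q$, so $\mathrm{Int}(c) \cap \gamma = \{q\}$. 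After possibly swapping the roles of $a$ and $b$, I may assume $\partial c \subset a$ and $\mathrm{Int}(c) \cap a = \emptyset$.

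Let $a' \subset a$ be the sub-arc of $a$ between the two endpoints of $c$. Sensibility gives $a' \subset \mathrm{Int}(a)$, and the identity $a' \cap c = \partial c = \partial a'$ shows that $\gamma' := a' \cup c$ is a simple closed curve. By construction it is an $(\alpha, \delta)$-bicorn curve whose $\alpha$-arc lies in $\mathrm{Int}(a)$, and it is essential because $F$ is closed.

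Finally, to verify $i(\gamma, \gamma') \leq 2$, I would isotope $\gamma'$ by pushing $a'$ slightly to one side of $a$ within a thin regular neighborhood and adjusting $c$ near $p, p'$ to reconnect with the pushed copy $\tilde{a}'$. For a sufficiently small push, $\tilde{a}'$ is disjoint from $\gamma$ (it avoids $a$ by construction and avoids $b$ because $a' \cap b = \emptyset$). The adjusted arc $\tilde{c}$ agrees with $c$ away from small neighborhoods of $p, p'$, so $|\tilde{c} \cap b| = |\mathrm{Int}(c) \cap b| \leq 1$. Near each of $p$ and $p'$ the arc $\tilde{c}$ crosses $a$ exactly when the chosen push side is opposite to the side on which $c$ emanates there, and not otherwise; selecting the push side to match $c$ at one of the two endpoints guarantees $|\tilde{c} \cap a| \leq 1$. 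Combining, $i(\gamma, \gamma') \leq |\tilde{c} \cap a| + |\tilde{c} \cap b| \leq 2$. I expect the main subtlety to lie in this sidedness analysis near $p$ and $p'$, which is what pins down the optimal push direction.
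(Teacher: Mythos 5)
Your proposal is correct and follows essentially the same route as the paper: you locate a minimal sub-arc $c$ of $\delta$ (a single edge of $\Gamma_{\gamma}(\delta)$ or two adjacent edges in the alternating case) with $\partial c$ on one of $a$ or $b$ and interior meeting $\gamma$ at most once, close it up with the sub-arc $a'\subset\mathrm{Int}(a)$ using sensibility, and push off to get $i(\gamma,\gamma')\le 2$. Your explicit sidedness analysis at $p,p'$ just fills in the isotopy the paper asserts in its final sentence.
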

\begin{proof}
The curve $\delta$ can be seen as a cyclic graph $\Gamma_{\gamma}(\delta)$ of length $\geq 3$ with the vertex set equal to $(a \cup b) \cap \delta$. 
By changing the role of $a$ and $b$ if necessary, we can find a minimal arc $\delta'$ of $\Gamma_{\gamma} (\delta)$ such that $i(\mathrm{Int}(\delta'), a) = 0$, $i(\mathrm{Int}(\delta'), b) \leq 1$ and $\partial \delta' \subset a$. 
Note that $\delta'$ consists of either two adjacent edges or a single edge. 
The closed curve $\alpha$ decomposes into the union of two arcs $a', a''$ whose endpoints are equal to those of $\delta'$. 
We may assume $a' \subset a$. 
It follows from the sensibility of $\alpha, \beta, \delta$ that $a' \subset \mathrm{Int} (a)$.  
This implies $a \cap a' = a'$ and $b \cap a' = \emptyset$. 
Then $\gamma' = \delta' \cup a'$ is the desired $(\alpha, \delta)$-bicorn curve.  
In fact, for some curve $\overline{\gamma'}$ isotopic to $\gamma'$, we have $\# (a \cap \overline{\gamma'}) \leq 1$ and $\# (b \cap \overline{\gamma'}) \leq 1$. 
\end{proof}

We transform the above lemma into a more convenient result.  

\begin{proposition}\label{slimbicorn_2}
Let $\alpha, \beta, \delta$ be sensible essential simple closed curves on $F$ mutually in minimal position. 
Then there exists an $(\alpha, \delta)$-bicorn curve $\eta$ and $(\beta, \delta)$-bicorn curve $\varepsilon$ such that $i(\eta, \varepsilon) \leq 2$, and that either $\eta$ or $\varepsilon$ intersects an $(\alpha, \beta)$-bicorn curve at most twice. 
\end{proposition}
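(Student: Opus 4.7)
The plan is to build $\eta$ and $\varepsilon$ by applying Lemma \ref{slimbicorn} in a controlled way, starting from a convenient $(\alpha,\beta)$-bicorn curve and using the fact that Lemma \ref{slimbicorn} gives, in each invocation, a bicorn of one of two possible types with an intersection bound of $2$ against the input.

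First I would fix an $(\alpha,\beta)$-bicorn curve $\zeta = a \cup b$ (for instance, the one supplied by Lemma \ref{bicorn_path_exists_2} or simply $\zeta = \alpha$ in the degenerate case). If $i(\zeta,\delta) \leq 2$, then a direct argument applies: one can take $\eta = \alpha$ and construct $\varepsilon$ as a $(\beta,\delta)$-bicorn close to $\beta$, using $\zeta$ as the witness $(\alpha,\beta)$-bicorn for the second condition. The substantive case is $i(\zeta,\delta) \geq 3$, and Lemma \ref{slimbicorn} applies to the sensible triple $(\alpha,\beta,\delta)$ with the bicorn $\zeta$, producing a curve $\gamma'$ with $i(\zeta,\gamma') \leq 2$ which is either an $(\alpha,\delta)$-bicorn with $\alpha$-arc in $\mathrm{Int}(a)$ or a $(\beta,\delta)$-bicorn with $\beta$-arc in $\mathrm{Int}(b)$. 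After relabelling $\alpha \leftrightarrow \beta$ if necessary, I may assume $\gamma'$ is an $(\alpha,\delta)$-bicorn and set $\eta := \gamma'$; then $i(\eta,\zeta)\leq 2$ already secures the ``one of $\eta,\varepsilon$ is close to an $(\alpha,\beta)$-bicorn'' clause of the proposition.

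The remaining task is to produce $\varepsilon$, a $(\beta,\delta)$-bicorn with $i(\eta,\varepsilon) \leq 2$. The natural move is to isotope $\eta$ into minimal position with the others and apply Lemma \ref{slimbicorn} once more, now to the sensible triple $(\alpha,\delta,\beta)$ with the $(\alpha,\delta)$-bicorn $\eta$, provided $i(\eta,\beta) \geq 3$. This yields a bicorn $\eta'$ with $i(\eta,\eta')\leq 2$ which is either an $(\alpha,\beta)$-bicorn or a $(\delta,\beta)=(\beta,\delta)$-bicorn. In the favourable subcase one sets $\varepsilon := \eta'$ and the proposition is proved.

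The main obstacle, and what I expect the proof really has to address, is the subcase where this second application of Lemma \ref{slimbicorn} returns an $(\alpha,\beta)$-bicorn rather than a $(\beta,\delta)$-bicorn. Two possible ways out are: (i) re-examine the cyclic graph $\Gamma_\eta(\beta)$ directly and exhibit a minimal arc whose endpoints lie on the $\delta$-arc of $\eta$, forcing a $(\beta,\delta)$-bicorn by hand (this works unless $\beta \cap \eta$ is concentrated entirely on the $\alpha$-side, a situation one can rule out using the fact that the $\alpha$-arc of $\eta$ lies in $\mathrm{Int}(a)$ and $\zeta$ was simple); or (ii) swap the roles at the very first step, i.e., begin instead with the $(\alpha,\beta)$-bicorn produced in this subcase and rerun the construction, showing that the bad subcase cannot recur. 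Both routes boil down to the same combinatorial point: once $\eta$ has been fixed, the cyclic graph $\Gamma_\eta(\beta)$ must contain a minimal arc with both endpoints on the $\delta$-arc of $\eta$, essentially because sensibility forces the intersections of $\beta$ with $\eta$ to meet both arcs of $\eta$.
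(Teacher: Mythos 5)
Your proposal correctly isolates where the difficulty lies, but it does not resolve it, and the two escape routes you sketch for the bad subcase both fail. After your second application of Lemma \ref{slimbicorn} to the triple $(\alpha,\delta,\beta)$ with the $(\alpha,\delta)$-bicorn $\eta = a'\cup d'$, the lemma may return an $(\alpha,\beta)$-bicorn rather than the $(\beta,\delta)$-bicorn you need, and this case genuinely occurs: nothing prevents $\beta$ from meeting $\eta$ only along its $\alpha$-arc $a'$ (the subarc $d'\subset\delta$ is just some arc of $\delta$, and $\beta\cap\delta$ may avoid it entirely). Your route (i) claims this can be ruled out because $a'\subset\mathrm{Int}(a)$ and $\zeta=a\cup b$ is simple, but simplicity of $\zeta$ only controls $b\cap\mathrm{Int}(a)$, not $(\beta-b)\cap\mathrm{Int}(a)$ nor $\beta\cap d'$; and sensibility only says $\alpha\cap\beta\cap\delta=\emptyset$, which does not force $\beta$ to meet the $\delta$-arc of $\eta$. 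Your route (ii) (restart from the new $(\alpha,\beta)$-bicorn) comes with no argument that the bad subcase cannot recur, and there is no evident measure of progress. The degenerate opening case $i(\zeta,\delta)\leq 2$ is also not actually handled: taking $\eta=\alpha$ and ``some'' $(\beta,\delta)$-bicorn $\varepsilon$ near $\beta$ gives no control on $i(\eta,\varepsilon)$.

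The paper circumvents this local iteration entirely by a global, discrete intermediate-value argument along a full $(\alpha,\beta)$-bicorn sequence $\mathcal{B}$ (Lemmas \ref{bicorn_path_exists_2} and \ref{bc_contain}): since $\alpha$ itself is an $(\alpha,\delta)$-bicorn and $\beta$ itself is a $(\beta,\delta)$-bicorn, there is a pair of \emph{consecutive} terms $\gamma_K,\gamma_{K+1}\in\mathcal{B}$ where Lemma \ref{slimbicorn} outputs an $(\alpha,\delta)$-bicorn $\gamma_K'=a_K'\cup d_K'$ for $\gamma_K$ and a $(\beta,\delta)$-bicorn $\gamma_{K+1}'=b_{K+1}'\cup d_{K+1}'$ for $\gamma_{K+1}$. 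The nesting property (I) of the sequence gives $a_K\cap b_{K+1}=\partial a_K\cup\partial b_{K+1}$, hence $a_K'\cap b_{K+1}'=\emptyset$, and a short case analysis on how the two $\delta$-arcs $d_K'$ and $d_{K+1}'$ overlap (possibly re-surgering one of the two bicorns when they cross twice) yields $i(\eta,\varepsilon)\leq 2$. This use of the monotone arc structure of the bicorn sequence is the missing ingredient in your argument; without it, the claim that a $(\beta,\delta)$-bicorn meeting $\eta$ at most twice exists is unsupported.
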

\begin{proof}
Let $\mathcal{B}$ be an $(\alpha, \beta)$-bicorn sequence obtained in Lemma \ref{bc_contain}. 
Since $\alpha$ is an $(\alpha, \delta)$-bicorn curve and since $\beta$ is a $(\beta, \delta)$-bicorn curve, Lemma \ref{slimbicorn} implies that there exists a pair of consecutive $(\alpha, \beta)$-bicorn curves $\gamma_{K} = a_{K} \cup b_{K}, \gamma_{K+1} = a_{K+1} \cup b_{K+1} \in \mathcal{B}$ such that $\gamma_{K}$ (resp.\ $\gamma_{K+1}$) intersects an $(\alpha, \delta)$-bicorn curve $\gamma_K' = a_{K}' \cup d_{K}'$ (resp.\ $(\beta, \delta)$-bicorn curve $\gamma_{K+1}' = b_{K+1}' \cup d_{K+1}'$) at most twice. 
Here, $a_{i}$ and $a_{i}'$ are $\alpha$-arcs, $b_{i}$ and $b_{i}'$ are $\beta$-arcs, and $d_{i}'$ is a $\delta$-arc (for $i=K, K+1$). 

We first treat the case where the $\delta$-arc $d_{K}'$ of $\gamma_{K}'$ intersects $b_{K+1}'$ at least twice. 
We may assume that $\mathrm{Int} (d_{K}')$ intersects $\mathrm{Int}(b_{K+1}')$ at least twice by using an isotopy. 
Then we can obtain a $(\beta, \delta)$-bicorn curve $\varepsilon$ from $d_{K}'$ and $b_{K+1}'$ by choosing a minimal arc $b \subset b_{K+1}'$ so that the interior of $b$ is disjoint from $d_{K}'$, and that $\partial b \subset d_{K}'$. 
We note that $\varepsilon$ can be $\gamma_{K+1}'$. 
Since $\gamma_{K}' \cap \varepsilon = \partial b$, we have $i(\gamma_{K}', \varepsilon) \leq 1$. 
Set $\eta := \gamma_{K}'$. 
Since $i(\eta, \varepsilon) \leq 1$ and $i (\eta, \gamma_K) \leq 2$, the assertion holds. 

The case where the $\delta$-arc $d_{K+1}'$ of $\gamma_{K+1}'$ intersects $a_{K}'$ at least twice can be treated similarly. 
Construct an $(\alpha, \delta)$-bicorn curve $\eta$ from $d_{K+1}'$ and $a_{K}'$ by choosing a minimal arc $a$ of $a_{K}'$ so that the interior of $a$ is disjoint from $d_{K+1}'$, and that $\partial a \subset d_{K+1}'$. 
By setting $\varepsilon = \gamma_{K+1}'$, we have $i(\eta, \varepsilon) \leq 1$ and $i (\varepsilon, \gamma_{K+1}) \leq 2$. 
Hence, the assertion holds. 

Suppose that $d_{K}'$ (resp.\ $d_{K+1}'$) intersects $b_{K+1}'$ (resp.\ $a_{K}'$) at most once. 
We shall prove $i(\gamma_{K}', \gamma_{K+1}') \leq 2$. 
If it is true, then by setting $\eta = \gamma_{K}'$ and $\varepsilon = \gamma_{K+1}'$ the assertion holds. 
From the construction of bicorn sequences in Lemma \ref{bicorn_path_exists_2}, it follows that $a_{K} \cap b_{K+1} = \partial a_{K} \cup \partial b_{K+1}$. 
Since $a_{K}' \subset \mathrm{Int} (a_K)$ and $b_{K+1}' \subset \mathrm{Int} (b_{K+1}')$ (see the assertion in Lemma \ref{slimbicorn}), we have that $a_{K}' \cap b_{K+1}' = \emptyset$. 
By our assumption, $\# (a_{K}', d_{K+1}') \leq 1$ and $\# (d_{K}' \cap b_{K+1}') \leq 1$.  
Since $d_{K}'$ intersects $b_{K+1}'$ at most once, $d_{K}' \not \supset d_{K+1}'$. 
Similarly, $d_{K+1}' \not \supset d_{K}'$. 
If $d_{K}' \cap d_{K+1}'$ is empty, we have $i(\gamma_{K}', \gamma_{K+1}') \leq 2$. 
So we consider the case where $d_{K}' \cap d_{K+1}'$ is not empty. 
Then $d_{K}' \cap d_{K+1}'$ must be an arc which is neither $d_{K}'$ nor $d_{K+1}'$. 
Hence, one endpoint of $a_{K}'$ (resp.\ $b_{K+1}'$) lies in $d_{K+1}'$ (resp.\ $d_{K}'$). 
This contributes $\# (a_{K}', d_{K+1}')$ (resp.\ $\# (d_{K}' \cap b_{K+1}')$), and hence we have $i(\gamma_{K}', \gamma_{K+1}') \leq 2$. 
As noted above, $\eta = \gamma_{K}'$ and $\varepsilon = \gamma_{K+1}'$ are the desired bicorn curves. 
\end{proof}

%%%%%int number and dist%%%%%%
\section{Intersection number and distance between curves} \label{IND}

The logarithmic upper bound due to Hempel is somewhat reasonable, because it follows from a Masur--Minsky's result \cite[Proposition 4.6]{MM99} that for any pseudo-Anosov element $\varphi$ and any essential simple closed curve $\alpha$ on an orientable surface $S$, $d_S (\alpha, \varphi^{n}(\alpha))$ grows linearly while $i(\alpha, \varphi^n(\alpha))$ grows exponentially as $n \rightarrow \infty$. 
However, the upper bound attains the length of geodesics in $\mathscr{C}(S)$ if and only if $i(\alpha, \beta) = 1$ (this can be easily verified by Theorem \ref{log_upp_1}). 
In order to estimate the distances in  curve graphs, we desire to yield a more sharp inequality. 
To this end we first see the following inequality. 

\begin{lemma}\label{start_log}
Suppose that $\alpha, \beta$ are essential simple closed curves on $F$ with $2 \leq i(\alpha, \beta) \leq 3$. 
Then $d_{F} (\alpha, \beta) \leq i(\alpha, \beta)$. 
\end{lemma}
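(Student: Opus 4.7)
The plan is to handle $i(\alpha,\beta)=2$ and $i(\alpha,\beta)=3$ separately, producing in each case a short path in $\mathscr{C}(F)$ through a bicorn-based intermediate curve.

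For $i(\alpha,\beta)=2$, the plan is to show $d_F(\alpha,\beta) \le 2$ by exhibiting an essential curve disjoint from $\alpha \cup \beta$ that is not isotopic to either. Take any $(\alpha,\beta)$-bicorn $\gamma = a \cup b$ between the two crossings $\partial a = \partial b$. Since the only points of $\alpha \cap \beta$ are these two corners, one automatically has $\mathrm{Int}(a) \cap \beta = \emptyset$ and $\mathrm{Int}(b) \cap \alpha = \emptyset$. A consistent bicollar pushoff of $\gamma$ (choosing, at each corner, the unique local sector compatible with the chosen side of the bicollar) produces a simple closed curve $\widetilde{\gamma}$ disjoint from $\alpha \cup \beta$. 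The curve $\gamma$, and hence $\widetilde{\gamma}$, is essential because $F$ is closed; moreover $\widetilde{\gamma}$ cannot be isotopic to $\alpha$ or $\beta$, for otherwise isotopy invariance of intersection number would force $i(\alpha,\beta) = 0$. Therefore $d_F(\alpha, \widetilde{\gamma}) = d_F(\widetilde{\gamma}, \beta) = 1$, and hence $d_F(\alpha,\beta) \le 2$.

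For $i(\alpha,\beta)=3$, I would apply the bicorn construction from the proof of Lemma \ref{bicorn_path_exists_2}. Pick two consecutive edges $e_1, e_2$ of $\Gamma_\alpha(\beta)$, set $b := e_1 \cup e_2$, and let $v_3$ denote the unique crossing not in $\partial b$, so that $v_3 \in \mathrm{Int}(b)$. Let $a \subset \alpha$ be the $\alpha$-arc with $\partial a = \partial b$ whose interior avoids $v_3$; this forces $\mathrm{Int}(a) \cap \beta = \emptyset$. Setting $\gamma := a \cup b$ yields an essential $(\alpha,\beta)$-bicorn. A bicollar pushoff of $\gamma$ then gives $i(\beta,\gamma) = 0$ (since $\mathrm{Int}(a) \cap \beta = \emptyset$) and $i(\alpha,\gamma) \le 1$ (since $\mathrm{Int}(b) \cap \alpha = \{v_3\}$), so $d_F(\gamma,\beta) \le 1$ and $d_F(\alpha,\gamma) \le 2$. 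The triangle inequality delivers $d_F(\alpha,\beta) \le 3$.

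The main technical point in both cases is the bicollar pushoff at each corner of the bicorn: one must verify that a consistent choice of ``interior'' sector at each of the two corners avoids producing spurious crossings of the perturbed curve with $\alpha$ or $\beta$, especially in the non-orientable setting where one-sided bicorns may require separate treatment. Once this local picture is pinned down, the residual intersections are given exactly by the interior crossings of the opposite arc with the opposite curve, and the stated bounds follow by direct counting.
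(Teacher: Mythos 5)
You correctly identify the corner analysis as the crux, but the claim you need there is false, and both cases of your argument break on it. At a corner $p$ of $\gamma=a\cup b$ the curves $\alpha$ and $\beta$ cross transversally, cutting a neighbourhood of $p$ into four sectors, and a pushoff of $\gamma$ avoids $\alpha\cup\beta$ near $p$ only if it enters the one sector bounded by the outgoing directions of $a$ and $b$; this forces the pushoff of $a$ onto the side of $\alpha$ from which $b$ departs at $p$, and likewise at the other corner $q$. The two requirements are compatible only if $b$ leaves $\alpha$ on the same side at both corners, which fails for every bicorn when the two intersection points have the same sign. Concretely, embed a one-holed torus in $F$ and let $\alpha,\beta$ have slopes $(1,0)$ and $(1,2)$: then $i(\alpha,\beta)=2$, but each of the four $(\alpha,\beta)$-bicorn curves has slope $(0,1)$ or $(1,1)$ and hence meets each of $\alpha$ and $\beta$ essentially once, so no pushoff $\widetilde{\gamma}$ disjoint from $\alpha\cup\beta$ exists. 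Since $i(\alpha,\gamma)=i(\beta,\gamma)=1$ only yields $d_F(\alpha,\gamma), d_F(\gamma,\beta)\leq 2$, your argument degrades to $d_F(\alpha,\beta)\leq 4$. The paper avoids bicorns entirely here: $\chi(\alpha\cup\beta)=-2$ together with $\chi(F)\leq -2$ shows $\alpha\cup\beta$ cannot fill $F$, so an essential curve (in general not a bicorn) lies in the complement, giving $d_F(\alpha,\beta)\leq 2$.

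The same corner problem invalidates your case $i(\alpha,\beta)=3$. From $\mathrm{Int}(a)\cap\beta=\emptyset$ you may conclude only $i(\beta,\gamma)\leq 1$, not $i(\beta,\gamma)=0$, since one of the two corners may force a crossing with $\beta$; similarly the correct elementary bound is $i(\alpha,\gamma)\leq 2$, which is precisely why Lemma \ref{1-3rd} claims no more than $2$. Taking slopes $(1,0)$ and $(1,3)$ in an embedded one-holed torus gives $i(\alpha,\beta)=3$ with every nondegenerate bicorn meeting $\beta$ at least once, so $i(\beta,\gamma)=0$ is unattainable, and with the corrected bounds you again obtain only $d_F(\alpha,\beta)\leq 2+2=4$. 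This is why the paper's proof of this case is genuinely more involved: it splits on whether a regular neighbourhood of $\alpha\cup\beta$ is orientable, invokes the Aougab--Huang non-filling criterion to produce a disjoint essential curve (hence $d_F\leq 2$) in the orientable case, and only in the non-orientable case constructs a bicorn with $i(\beta,\gamma)=0$ and $i(\alpha,\gamma)\leq 2$, the crosscap being exactly what makes the two corner conditions simultaneously satisfiable there.
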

\begin{proof}
{\bf Case $i(\alpha, \beta) = 2$.} 
In this case $\chi(\alpha \cup \beta) = -2$. 
Since $\chi(F) \leq -2$, the exterior of $\alpha \cup \beta$ has an essential simple closed curve. 
This implies $d_{F}(\alpha, \beta) \leq 1+1 = 2$. 

{\bf Case $i(\alpha, \beta) = 3$.} 
Suppose that the regular neighborhood of $\alpha \cup \beta$ is orientable. 
The regular neighborhood $U$ of  $\alpha \cup \beta$ is not a sphere, because the intersection number in a sphere is even. 
Since $\chi(\alpha \cup \beta) = -3$, the regular neighborhood $U$ is homeomorphic to either a torus with three boundary components or an orientable surface of genus $2$ with one boundary component. 
In both cases we can find an essential closed curve in $F \setminus U$ as follows. 
If $U$ is homeomorphic to an orientable surface of genus $2$ with one boundary component, then $F \setminus U$ is not a disk, because a theroem due to Aougab--Huang \cite[Theorem 2.17]{AH15} states that no pair of essential simple closed curves with the intersection number $\leq 3$ fills a closed orientable surface of genus $2$. 
Hence, the total surface $F$ is either an orientable surface of genus $\geq 3$ or a non-orientable surface by our assumption. 
This implies that an essential simple closed curve can be found in the exterior of $U$. 
The case where $U$ is homeomorphic to a three-holed torus can be treated similarly. 
Therefore we have $d_{F}(\alpha, \beta) \leq 2$. 

We next suppose that the regular neighborhood of $\alpha \cup \beta$ is non-orientable. 
It is enough to find an $(\alpha, \beta)$-bicorn curve $\gamma$ on $F$ such that $i(\alpha, \gamma) \leq 2$ and $i(\beta, \gamma) = 0$. 
Then we have $d_{F}(\alpha, \gamma) \leq 2$ and  $d_{F}(\beta, \gamma) \leq 1$, and so $d_{F}(\alpha, \beta) \leq 3$. 
To this end, we write the components of $\beta - \alpha$ by $b_i$ ($i=1,2,3$). 
Note that the regular neighborhood $N_i$ of $\beta - b_i$ is a topological disk in $F$. 
Since the regular neighborhood of $\alpha \cup \beta$ is non-orientable, there exists a component $c$ of $(\alpha \cup \beta) - (\alpha \cap \beta)$ such that the regular neighborhood of $(\beta - b_3) \cup c$ is homeomorphic to a M\"obius band. 
By changing the role of $\alpha$ and $\beta$, if necessary, we may assume that $c = b_3$. 
Since $i(\alpha, \beta) = 3$, we have a component $a$ of $\alpha - \beta$ such that $a$ connects the distinct components of $N_3 - \beta$. 
It follows that at least one of $a \cup b_3$, $a \cup b_3 \cup b_1$ and $a \cup b_3 \cup b_2$ is the desired $(\alpha, \beta)$-bicorn curve $\gamma$ (See Figure \ref{g843}).  
\begin{figure}
\centering
\includegraphics[width=5cm]{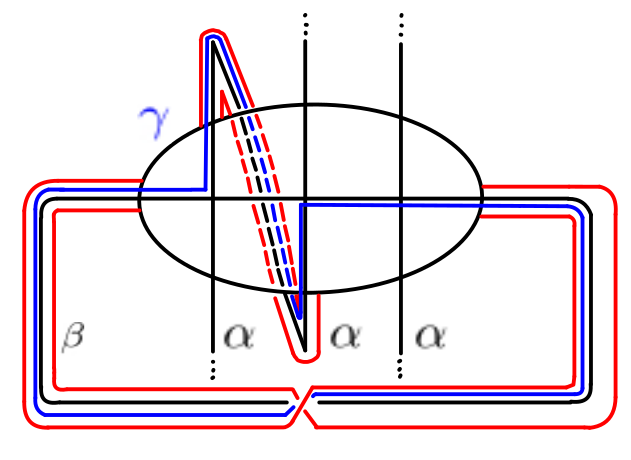}
\caption{This figure illustrates a closed curve which intersects $\alpha$ at most twice and is disjoint from $\beta$.  
The closed curve $\alpha$ is drawn as vertical black lines. 
A black square in the figure is the closed curve $\beta$. 
An oval in the middle is the regular neighborhood $N_3$ of $\beta - b_3$ in $F$. 
The arc $a$ connects not only the distinct components of $N_3 - \beta$, but also two vertical lines of $\alpha$. 
A portion of the collar of $a$ is drawn as the red band passing behind the oval. 
Similarly, the twisted red band depicts a portion of the collar of $b_3$.  
The blue closed curve is a representative of $\gamma = a \cup b_3 \cup b_2$. 
  \label{g843}}
\end{figure}
So we are done. 
\end{proof}

Let us prove Theorem \ref{log_upp_1}. 

\begin{proof}[Proof of Theorem \ref{log_upp_1}] 
Induction on $i(\alpha, \beta)$. 

{\bf Case  $2 \leq i(\alpha, \beta) \leq 3$.} Since $i (\alpha, \beta) < 2 \log_{3} ( \frac{9}{4} i(\alpha, \beta))$, Lemma \ref{start_log} directly implies the assertion. 

{\bf Case $4 \leq i(\alpha, \beta) \leq 8$.} 
By Lemma \ref{1-3rd}, there exists an essential simple closed curve $\gamma$ such that $i(\alpha, \gamma) \leq 2 < \frac{8}{3}$ and $i(\alpha, \gamma) \leq 1$. 
Then by the triangle inequality, we have $d_{F} (\alpha, \beta) \leq d_{F}(\alpha, \gamma) + d_{F}(\gamma, \beta) \leq 2 + 2 \leq 4 \leq 2 \log_3 ( \frac{9}{4} i(\alpha, \beta) )$. 
In particular, when $i (\alpha, \beta) = 4$, our upper bound is equal to $ 2 \log_3 ( \frac{9}{4}i(\alpha, \beta) )  = 2 \log_{3} 9 = 4$. 

{\bf Case $i(\alpha, \beta) \geq 9$.} 
By Lemma \ref{1-3rd}, there exists an essential simple closed curve $\gamma$ such that $i(\alpha, \gamma) \leq 2$ and $i(\beta, \gamma) \leq \frac{i(\alpha, \beta)}{3}$. 
By induction hypothesis we have the following: 
\begin{eqnarray*}
 d_{F}(\alpha, \beta) & \leq &d_{F} (\alpha, \gamma) + d_{F}(\gamma, \beta) \\
                       &\leq & 2 + d_{F}(\beta, \gamma) \\
                       &\leq & 2 + 2 \mathrm{log}_{3} \left( \frac{9}{4} i(\beta, \gamma) \right) \\
                       &\leq & 2 +  2 \mathrm{log}_{3} \left(\frac{9}{4} \frac{i (\alpha, \beta)}{3} \right). \\
\end{eqnarray*}
The last formula is equal to the desired bound $2 \mathrm{log}_{3} \left(\frac{9}{4} i (\alpha, \beta)\right)$. 
\end{proof}

From the proof of Theorem \ref{log_upp_1}, we have the following. 

\begin{proposition}
Suppose that $\alpha, \beta$ are essential simple closed curves on $F$ with $i(\alpha, \beta) \leq 8$. 
Then $d_{F} (\alpha, \beta) \leq 4$. 
\label{8to4}
\end{proposition}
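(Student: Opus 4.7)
The plan is to combine the two cases already worked out in the proof of Theorem \ref{log_upp_1}, plus handle the easy small-intersection cases. I split on $i(\alpha,\beta)$ as follows.

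First, if $i(\alpha,\beta) \leq 1$, the regular neighborhood of $\alpha \cup \beta$ has Euler characteristic $-1$, and since $\chi(F) \leq -2$, the exterior of this neighborhood carries an essential simple closed curve $\delta$. Then $d_F(\alpha,\beta) \leq d_F(\alpha,\delta) + d_F(\delta,\beta) \leq 2 \leq 4$. If $i(\alpha,\beta) = 0$, the bound is even more trivial.

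Second, if $2 \leq i(\alpha,\beta) \leq 3$, Lemma \ref{start_log} directly yields $d_F(\alpha,\beta) \leq i(\alpha,\beta) \leq 3 \leq 4$.

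Third, the main case $4 \leq i(\alpha,\beta) \leq 8$ is essentially already handled inside the proof of Theorem \ref{log_upp_1}: by Lemma \ref{1-3rd}, there is an $(\alpha,\beta)$-bicorn curve $\gamma$ with $i(\alpha,\gamma) \leq 2$ and $i(\beta,\gamma) \leq i(\alpha,\beta)/3 \leq 8/3$, so $i(\beta,\gamma) \leq 2$ since intersection numbers are integers. Applying Lemma \ref{start_log} to the pairs $(\alpha,\gamma)$ and $(\beta,\gamma)$ (treating the trivial subcases $i \leq 1$ as above when they arise) gives $d_F(\alpha,\gamma) \leq 2$ and $d_F(\beta,\gamma) \leq 2$, and the triangle inequality finishes $d_F(\alpha,\beta) \leq 4$.

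There is no real obstacle here; the content lives entirely in Lemmas \ref{1-3rd} and \ref{start_log}. The only mild nuisance is making sure the low-intersection endpoints ($i(\cdot,\cdot) \in \{0,1\}$ for the pairs $(\alpha,\gamma)$ and $(\beta,\gamma)$) are folded into the statement of Lemma \ref{start_log}, but these give distance at most $2$ trivially from the Euler characteristic estimate, so nothing new is needed.
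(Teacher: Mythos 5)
Your proposal is correct and follows essentially the same route as the paper, which derives the proposition directly from the cases $2 \leq i(\alpha,\beta) \leq 3$ and $4 \leq i(\alpha,\beta) \leq 8$ in the proof of Theorem \ref{log_upp_1} (i.e.\ Lemma \ref{start_log} plus one application of Lemma \ref{1-3rd} and the triangle inequality). Your explicit treatment of the endpoints $i(\alpha,\beta) \leq 1$ via the Euler characteristic of a regular neighborhood is a harmless elaboration of what the paper leaves implicit.
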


We can estimate the distance between bicorn sequences and paths by using slimness of bicorn curves as follows. 

\begin{lemma}\label{dist_bc}
Let $x_0, x_1, \ldots, x_m$ be a sequence of essential simple closed curves on $F$ with $i(x_i, x_{i+1}) = 0$ $(1 \leq i \leq m-1)$ and $2^{n-1} < m \leq 2^{n}$ for some positive integer $n$. 
Suppose that $b$ is an $(x_0, x_m)$-bicorn curve. 
Then there exists a number $l$ such that $d_F(b, x_l) \leq 2n - 1 = 2 \lceil \mathrm{log}_2 (m) \rceil - 1$,  where $\lceil r \rceil$ denotes the ceil of a real number $r$. 
\end{lemma}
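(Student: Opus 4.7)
The plan is to prove the lemma by induction on $n$, via a binary subdivision of the sequence combined with Lemma \ref{slimbicorn}.

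For the base case $n = 1$ (so $m = 2$), the curve $b$ is an $(x_0, x_2)$-bicorn and therefore contained in $x_0 \cup x_2$ as a subset of $F$. Since $x_1$ is disjoint from each of $x_0$ and $x_2$, it is disjoint from $b$, giving $d_F(b, x_1) \leq 1 = 2n - 1$.

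For the inductive step, assume $n \geq 2$ and $m \geq 3$. Place all curves under consideration in pairwise minimal position and in a generic sensible configuration, and set $k = \lfloor m/2 \rfloor$, so both $k$ and $m - k$ are at most $\lceil m/2 \rceil \leq 2^{n-1}$. If $i(b, x_k) \leq 2$, then Lemma \ref{start_log} (together with the trivial bound for $i \leq 1$) gives $d_F(b, x_k) \leq 2 \leq 2n - 1$, and we take $l = k$. Otherwise $i(b, x_k) \geq 3$, and we apply Lemma \ref{slimbicorn} with $(\alpha, \beta, \delta) = (x_0, x_m, x_k)$ and $\gamma = b$, producing an essential simple closed curve $\gamma'$ with $i(b, \gamma') \leq 2$ that is either an $(x_0, x_k)$-bicorn or an $(x_k, x_m)$-bicorn. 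After symmetry we may assume the former, so Lemma \ref{start_log} yields $d_F(b, \gamma') \leq 2$. When $k \geq 2$, the subsequence $x_0, \ldots, x_k$ satisfies the lemma's hypotheses with $n_1 \leq n - 1$ determined by $2^{n_1 - 1} < k \leq 2^{n_1}$, and the induction hypothesis produces an index $l$ with $d_F(\gamma', x_l) \leq 2n_1 - 1 \leq 2n - 3$; the triangle inequality then gives $d_F(b, x_l) \leq 2 + (2n - 3) = 2n - 1$. The only remaining case is $k = 1$ (which forces $m = 3$): here $\gamma'$ is an $(x_0, x_1)$-bicorn with $i(x_0, x_1) = 0$, so $\gamma' \in \{x_0, x_1\}$ and $d_F(b, x_l) \leq 2 \leq 2n - 1$ for an appropriate $l \in \{0, 1\}$.

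The main technical obstacle will be the bookkeeping of minimal position and sensibility each time we pass to a subsequence and re-isotope $\gamma'$ with respect to the curves of the shorter subsequence, so that Lemma \ref{slimbicorn} is legitimately applicable in the recursive step. Once this is handled, the bound $2n - 1$ emerges from the clean recurrence of adding $2$ per halving of the length, anchored by the distance-$1$ base case.
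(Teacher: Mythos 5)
Your proof is correct and follows essentially the same route as the paper: induction on $n$, with the same disjointness argument in the base case and a binary subdivision at the midpoint curve combined with Lemma \ref{slimbicorn} in the inductive step. If anything, your version is slightly more careful than the paper's, since you treat general $m$ rather than only $m = 2^n$, and you explicitly handle the cases $i(b, x_k) \leq 2$ and $k = 1$ where Lemma \ref{slimbicorn} does not apply.
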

\begin{proof}
Induction on $n$. 
We assume $m = 2^n$, because the other cases can be treated similarly. 
Suppose $n=1$. 
Since $b$ is an $(x_0, x_m)$-bicorn curve, $x_1$ must be disjoint from $b$. 
Namely, we have $d_{F}(b, x_1) = 1$. 

For any $n$, by applying Lemma \ref{slimbicorn} to $\alpha = x_0$, $\beta = x_{2^n}$, $\delta = x_{2^{n-1}}$, we have a bicorn curve $b'$ between $x_0$ and $x_{2^{n-1}}$ (or between $x_{2^n}$ and $x_{2^{n-1}}$) such that $d_{F}(b, b') \leq 2$. 
The induction hypothesis together with the triangle inequality enables us to obtain the desired inequality. 
\end{proof}

The following proposition plays key roles in the proofs of Theorems \ref{hyp} and \ref{bgit}. 

\begin{proposition}\label{12ball_prop}
Suppose that $\mathcal{G}$ is a geodesic connecting essential simple closed curves $\alpha$ and $\beta$ in $\mathscr{C}(F)$, and $\mathcal{B}$ is an $(\alpha, \beta)$-bicorn sequence. 
Then $\mathcal{B}$ is contained in the $13$-neighborhood of $\mathcal{G}$. 
\end{proposition}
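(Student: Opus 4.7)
The plan is to proceed by induction on $m := d_F(\alpha, \beta)$, combining Lemma \ref{dist_bc} with the slim bicorn triple Lemma \ref{slimbicorn}. The numerology is suggestive: $2\lceil \log_2 m\rceil - 1 = 13$ precisely when $m = 2^7 = 128$, so one expects a clean split of the argument at $m = 128$.

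\emph{Base case} ($m \leq 128$): Since $\mathcal{G}$ presents as a sequence $x_0 = \alpha, x_1, \ldots, x_m = \beta$ with $i(x_i, x_{i+1}) = 0$, and since every $\gamma \in \mathcal{B}$ is an $(\alpha, \beta)$-bicorn, a direct application of Lemma \ref{dist_bc} yields $d_F(\gamma, x_l) \leq 2\lceil \log_2 m \rceil - 1 \leq 13$ for some $x_l$ on $\mathcal{G}$, which is what we want.

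\emph{Inductive step} ($m > 128$): The plan is to pick a vertex $x_k$ on $\mathcal{G}$ with $k = 64$, so that the sub-geodesic $[\alpha, x_k]$ has length exactly $64$. Given $\gamma \in \mathcal{B}$, if $i(\gamma, x_k) \leq 2$ then $d_F(\gamma, x_k) \leq 2$ already (for closed $F$ with $\chi(F)\leq -2$, intersection $\leq 2$ forces distance $\leq 2$ by Lemma \ref{start_log}); otherwise Lemma \ref{slimbicorn} produces a bicorn $\gamma'$ which is either an $(\alpha, x_k)$-bicorn or a $(\beta, x_k)$-bicorn, with $i(\gamma, \gamma') \leq 2$ and hence $d_F(\gamma, \gamma') \leq 2$. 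In the favorable subcase where $\gamma'$ is an $(\alpha, x_k)$-bicorn, Lemma \ref{dist_bc} applied to $[\alpha, x_k]$ gives $d_F(\gamma', x_l) \leq 2\lceil \log_2 64 \rceil - 1 = 11$, so $d_F(\gamma, \mathcal{G}) \leq 2 + 11 = 13$, closing this subcase on the nose.

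The main obstacle is the opposite subcase, in which $\gamma'$ is a $(\beta, x_k)$-bicorn attached to the long sub-geodesic $[x_k, \beta]$ of length $m - 64 > 64$: there Lemma \ref{dist_bc} alone would only yield $O(\log m)$, and iterating the halving idea naively merely reproduces the logarithmic bound from Lemma \ref{dist_bc}. To close this case with the uniform constant $13$ I would pair the argument with the symmetric choice $k = m-64$ and invoke Proposition \ref{slimbicorn_2}, which produces bicorns on both sides of the split simultaneously and, crucially, pins one of them to be close (in intersection number, hence in $d_F$ via Proposition \ref{8to4}) to an $(\alpha, \beta)$-bicorn in a specified bicorn sequence---for which I would use Lemma \ref{bc_contain} to embed $\gamma$ itself. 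The delicate part will be arranging the bookkeeping, using the nested-arc structure of $\mathcal{B}$, so that at least one of the two symmetric splits always lands $\gamma'$ on the short side, thereby absorbing the entire cost into the $2 + 11$ budget and preventing the constant from blowing up.
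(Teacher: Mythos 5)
Your base case and the ``favorable'' subcase are fine, but the unfavorable subcase is not a bookkeeping detail to be arranged later: it is the entire difficulty, and the mechanism you propose does not close it. Lemma \ref{slimbicorn} gives no control over which of the two sides the curve $\gamma'$ lands on, and the two symmetric splits at $x_{64}$ and $x_{m-64}$ are independent applications of that lemma: nothing prevents the first from producing a $(\beta, x_{64})$-bicorn and the second an $(\alpha, x_{m-64})$-bicorn, i.e.\ both landing on long subgeodesics. Proposition \ref{slimbicorn_2} does not rescue this, because the $(\alpha,\beta)$-bicorns it pins $\eta$ or $\varepsilon$ to form a specific consecutive pair $\gamma_K, \gamma_{K+1}$ determined by its own construction, not the arbitrary $\gamma$ you are trying to locate; embedding $\gamma$ in a sequence via Lemma \ref{bc_contain} does not make that pair close to $\gamma$. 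And if you instead fall back on the induction hypothesis for the long side, you only get $d_F(\gamma,\mathcal{G}) \leq 2 + 13 = 15$, so the constant grows by $2$ at every level and the induction collapses.

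The paper avoids this by not inducting on $d_F(\alpha,\beta)$ at all. It takes $b^*$ to be the bicorn in $\mathcal{B}$ \emph{furthest} from $\mathcal{G}$, sets $k := d_F(b^*,\mathcal{G})$, and exploits two facts: consecutive members of $\mathcal{B}$ are at distance $\leq 2$, so one can find members $\gamma_i, \gamma_j$ of $\mathcal{B}$ flanking $b^*$ at distance roughly $2k$ from it; and, by the nested-arc property (I) of Lemma \ref{bicorn_path_exists_2}, $b^*$ is itself a $(\gamma_i,\gamma_j)$-bicorn curve. Concatenating geodesics of length $\leq k$ from $\gamma_i$ and $\gamma_j$ down to $\mathcal{G}$ with the intervening subpath of $\mathcal{G}$ yields a path $L$ from $\gamma_i$ to $\gamma_j$ of length at most $8k+2$, and Lemma \ref{dist_bc} applied to $b^*$ and $L$ gives $d_F(b^*,L) \leq 2\lceil\log_2(8k+2)\rceil - 1$; since the nearest point of $L$ to $b^*$ is forced (by the triangle inequality on the connecting geodesics, or because it lies on $\mathcal{G}$) to be at distance $\geq k$ from $b^*$, one gets $k \leq 2\lceil\log_2(8k+2)\rceil-1$ and hence $k \leq 13$. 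The idea your proposal is missing is that the logarithmic Lemma \ref{dist_bc} must be applied to a path of length $O(k)$ assembled from the bicorn sequence and the geodesic together, rather than to a piece of $\mathcal{G}$ whose length you cannot bound independently of $m$.
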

\begin{proof}
Our proof is almost the same as in the proofs of \cite[Proposition 4.2]{HPW15} and \cite[Proposition 3.4]{CJM21}. 
Let $b^{*}$ be a bicorn curve in $\mathcal{B}$ which is the furthest vertex from $\mathcal{G}$. 
Set $k:= d_{F}(b^{*}, \mathcal{G})$. 
We shall show the inequality $k \leq 2 \lceil \mathrm{log}_2 (8k + 2) \rceil - 1$, from which we can deduce $k \leq 13$. 

We first consider the case where both of $\alpha$ and $\beta$ are contained in the closed $(2k - 1)$-neighborhood of $b^{*}$. 
Then the length of $\mathcal{G}$ is not more than $4k - 2$. 
By Lemma \ref{dist_bc}, we have $k \leq \lceil \mathrm{log}_2 (4k - 2) \rceil$, and thus the assertion holds. 

We next treat the case where $\alpha$ is contained in the $(2k- 1)$-neighborhood of $b^{*}$, and $\beta$ is not. 
Since the distance between consecutive bicorn curves is at most $2$, there exists a bicorn curve $\gamma_j$ in $\mathcal{B} \cap (S(b^{*}; 2k) \cup S(b^{*}; 2k + 1))$ succeeding to $b^{*}$. 
Here, $S(b^*; r)$ is the set of the vertices of $\mathscr{C}(F)$ with distance $r$ from $b^{*}$. 
Pick a shortest geodesic $\mathcal{G}_j$ connecting $\gamma_j$ and $\mathcal{G}$, and let $L$ denote the concatenation of $\mathcal{G}_j$ and the subpath of $\mathcal{G}$ which issues from $\alpha$ and ends at $\mathcal{G}_j$. 
Note that the length of $L$ is not more than $k + (2k-1+2k+1  + k) = 6k$. 
Since $b^{*}$ is a $(\alpha, \gamma_j)$-bicorn curve, we can apply Lemma \ref{dist_bc} to $b^{*}$ and $L$, and so we have $d_{F}(\gamma^{*}, L) \leq \lceil  2\mathrm{log}_2 (6k) \rceil$. 
By $l^{*}$ we denote a curve in $L$ which is the most nearest to $b^{*}$. 
If $l^{*} \in \mathcal{G}$, then it follows that $k \leq d_F(b^{*}, l^{*}) \leq \lceil \mathrm{log}_2 (6k) \rceil$, and therefore the assertion holds. 
Otherwise $l^{*} \in \mathcal{G}_j$. 
Since $2k \leq d_{F}(b^{*}, \gamma_j) \leq d_{F}(b^{*}, l^{*}) + d_{F}(l^{*}, \gamma_j) \leq d_{F}(b^{*}, l^{*}) + k$, we have that $d_{F}((b^{*}, l^{*}) \geq k$. 
Therefore we have $k \leq \lceil \mathrm{log}_2 (6k) \rceil $ again, and the assertion holds. 
In the case where $\beta$ is contained in the closed $(4k-1)$-ball of $b^{*}$, and $\alpha$ is not, we can argue similarly as above. 

Finally, we assume that neither $\alpha$ nor $\beta$ is contained in the closed $(2k-1)$-ball of $b^{*}$.  
Pick a bicorn curve $\gamma_i$ in $\mathcal{B}$ preceding to $b^{*}$ and a bicorn curve $\gamma_j$ in $\mathcal{B}$ suceeding to $b^{*}$, which are contained in $S(b^{*}; 2k) \cup S(b^{*}; 2k + 1)$. 
In this case, $b^{*}$ lies between $\gamma_i$ and $\gamma_j$. 
Let $\mathcal{G}_i$ (resp.\ $\mathcal{G}_j$) be a geodesic of length $\leq k$ connecting $\gamma_i$ (resp.\ $\gamma_j$) and $\mathcal{G}$. 
By $g_i$ (resp.\ $g_j$) we denote the endpoint of $\mathcal{G}_i$ (resp.\ $\mathcal{G}_j$) which lies in $\mathcal{G}$. 
By $L$ we denote the concatenation of $\mathcal{G}_i$, $\mathcal{G}_j$ and the subpath $\mathcal{G}'$ of $\mathcal{G}$ which ends at $g_i$ and $g_j$. 
We note that the endpoints of $L$ are $\gamma_i$ and $\gamma_j$, and the length of $L$ is at most $k + (k+(2k+1)+(2k+1)+k) + k = 8k + 2$. 
Applying Lemma \ref{dist_bc} to $b^{*}$ and $L$, it follows that $d_{F}(b^{*}, L) \leq  2\lceil \mathrm{log}_2 (8k+2) \rceil - 1$. 
Pick a curve $l^{*}$ in $L$ which is the most nearest to $b^{*}$. 
In the case where $l^{*} \in \mathcal{G}'$, it holds that $k = d_{F}(b^{*}, l^{*}) \leq 2\lceil \mathrm{log}_2 (8k+2) \rceil - 1$, and the assertion holds. 
The case where $l^{*} \in \mathcal{G}_i \cup \mathcal{G}_j$ can be treated as follows. 
Assume that $l^{*} \in \mathcal{G}_i$ (the other case can be treated similarly). 
Since $2k \leq d_{F}(b^{*}, \gamma_i) \leq d_{F}(b^{*}, l^{*}) + d_{F}(l^{*}, \gamma_i) \leq d_{F}(b^{*}, l^{*}) + k$, it holds that $d_{F}(b^{*}, l^{*}) \geq k$. 
On the other hand, since  $l^*$ is contained in $L$, it follows that $d_{F}(b^{*}, l^{*}) \leq 2 \lceil \mathrm{log}_2 (8k+2) \rceil - 1$. 
Therefore, we have $k \leq 2\lceil \mathrm{log}_2 (8k+2) \rceil - 1$, as desired. 
\end{proof}

By combining Lemma \ref{bc_contain} and Proposition \ref{12ball_prop}, we can see that all bicorn curves are contained in some small neighborhood of a geodesic. 

\begin{corollary}\label{12ball_cor}
Suppose that $\mathcal{G}$ is a geodesic joining $\alpha$ and $\beta$ in $\mathscr{C}(F)$, and $\gamma$ is an $(\alpha, \beta)$-bicorn curve.  
Then $\gamma$ is contained in the $13$-neighbourhood of $\mathcal{G}$. 
\end{corollary}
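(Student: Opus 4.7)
The plan is to reduce the corollary to Proposition \ref{12ball_prop} by producing an $(\alpha,\beta)$-bicorn sequence that contains the given bicorn curve $\gamma$. All of the hard work is already packaged in the two preceding results, so the argument is essentially a short bookkeeping exercise combining them.

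First I would dispose of the degenerate intersection cases, since Lemma \ref{bc_contain} is stated only under the hypothesis $i(\alpha,\beta)\geq 3$. If $i(\alpha,\beta)\leq 1$, the definition of a bicorn curve forces one of the arcs $a,b$ to be empty (there are not enough intersection points to serve as two common endpoints of non-empty arcs), so $\gamma\in\{\alpha,\beta\}\subseteq\mathcal{G}$ and the conclusion is immediate. If $i(\alpha,\beta)=2$, then every $(\alpha,\beta)$-bicorn curve $\gamma=a\cup b$ satisfies $i(\alpha,\gamma),i(\beta,\gamma)\leq 2$ (all intersections are confined to the two points of $\alpha\cap\beta$), so Proposition \ref{8to4} gives $d_F(\gamma,\alpha)\leq 4$, comfortably inside the $13$-neighborhood of $\mathcal{G}$.

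For the generic case $i(\alpha,\beta)\geq 3$, I would apply Lemma \ref{bc_contain} to enlarge $\gamma$ into an $(\alpha,\beta)$-bicorn sequence $\mathcal{B}$ with $\gamma\in\mathcal{B}$, and then invoke Proposition \ref{12ball_prop} to conclude that $\mathcal{B}$ lies in the $13$-neighborhood of $\mathcal{G}$. Since $\gamma\in\mathcal{B}$, this finishes the proof.

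There is no genuine obstacle here: the result is essentially a formal consequence of Lemma \ref{bc_contain} and Proposition \ref{12ball_prop}. The only thing that requires attention is the small-intersection regime, which falls outside the hypothesis of Lemma \ref{bc_contain} but is handled either by the trivial observation that $\gamma$ itself lies on $\mathcal{G}$, or by the uniform bound in Proposition \ref{8to4}.
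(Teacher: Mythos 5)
Your proposal is correct and follows exactly the route the paper intends: combine Lemma \ref{bc_contain} (to embed $\gamma$ in an $(\alpha,\beta)$-bicorn sequence) with Proposition \ref{12ball_prop}. Your extra care with the cases $i(\alpha,\beta)\leq 2$, which the paper leaves implicit, is a welcome but minor addition.
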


Here, we give a proof of Theorem \ref{hyp}. 

\begin{proof}[Proof of Theorem \ref{hyp}]
Let $\Delta (\alpha, \beta, \delta)$ be a geodesic triangle in $\mathscr{C}(F)$ with apices $\alpha, \beta, \gamma$. 
By Proposition \ref{slimbicorn_2}, we have an $(\alpha, \delta)$-bicorn curve $\eta$ and $(\beta, \delta)$-bicorn curve $\varepsilon$ such that $i(\eta, \varepsilon) \leq 2$, and that either $\eta$ or $\varepsilon$ intersects an $(\alpha, \beta)$-bicorn curve at most twice. 
We may assume that $\eta$ intersects an $(\alpha, \beta)$-bicorn curve, say $\theta$, at most twice. 
It follows that $d_{F}(\eta, \varepsilon) \leq 2$ and $d_{F}(\eta, \theta) \leq 2$. 
By Corollary \ref{12ball_cor}, we have $d_{F}(\eta, [\alpha, \delta]) \leq 13$. 
Here, $[\alpha, \delta]$ is the side of $\Delta (\alpha, \beta, \delta)$ connecting $\alpha$ and $\delta$. 
Similarly, $d_{F}(\varepsilon, [\beta, \delta]) \leq 13$ and $d_{F} (\theta, [\alpha, \beta]) \leq 13$. 
Hence, $\eta$ is a $15$-center of $\Delta (\alpha, \beta, \delta)$. 
\end{proof}

\begin{corollary}
Under the setting of Proposition \ref{12ball_prop}, the geodesic $\mathcal{G}$ is contained in the $26$-neighborhood of the bicorn sequence $\mathcal{B}$. 
\label{Hausdorff_dist_cor}
\end{corollary}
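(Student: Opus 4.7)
The plan is to combine Proposition~\ref{12ball_prop} with a discrete intermediate value argument along the bicorn sequence.

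Parametrize $\mathcal{G}$ by consecutive vertices $v_0 = \alpha, v_1, \ldots, v_L = \beta$ and write $\mathcal{B}$ as $\gamma_1 = \alpha, \gamma_2, \ldots, \gamma_n = \beta$. Fix an arbitrary vertex $v_k$ on $\mathcal{G}$; it suffices to exhibit some bicorn $\gamma_{j^*}$ with $d_F(v_k, \gamma_{j^*}) \leq 26$. The first ingredient is Proposition~\ref{12ball_prop}, which provides, for each $j$, a vertex $N_j = v_{n_j}$ on $\mathcal{G}$ with $d_F(\gamma_j, N_j) \leq 13$; taking $N_1 = v_0$ and $N_n = v_L$ fixes $n_1 = 0$ and $n_n = L$.

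The second ingredient comes from Lemma~\ref{bicorn_path_exists_2}(II), which gives $i(\gamma_j, \gamma_{j+1}) \leq 1$, hence $d_F(\gamma_j, \gamma_{j+1}) \leq 2$, and so, because $\mathcal{G}$ is a geodesic,
\[
|n_{j+1} - n_j| \;\leq\; d_F(N_j, N_{j+1}) \;\leq\; 13 + 2 + 13 \;=\; 28.
\]
A discrete intermediate value argument, using $n_1 = 0 \leq k \leq L = n_n$, then produces an index $j^*$ with $|n_{j^*} - k| \leq 13$; the triangle inequality yields
\[
d_F(v_k, \gamma_{j^*}) \;\leq\; d_F(v_k, N_{j^*}) + d_F(N_{j^*}, \gamma_{j^*}) \;\leq\; 13 + 13 \;=\; 26.
\]

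The main obstacle is to squeeze the intermediate value step from a naive half-jump bound of $14$ down to $13$. I would overcome this by a case split on the transition pair $(\gamma_{j^*}, \gamma_{j^*+1})$ at which the sequence $n_j$ crosses $k$. When $d_F(\gamma_{j^*}, \gamma_{j^*+1}) \leq 1$, the jump is at most $27$, so at least one of the two half-jumps is automatically $\leq 13$. When $d_F(\gamma_{j^*}, \gamma_{j^*+1}) = 2$, the equality $i(\gamma_{j^*}, \gamma_{j^*+1}) = 1$ produces an essential simple closed curve $c$ disjoint from both bicorns; its closest vertex on $\mathcal{G}$ (which lies at distance at most $1 + 13 = 14$ from $c$) interpolates between $N_{j^*}$ and $N_{j^*+1}$ and refines the transition into two shorter pieces, one of whose endpoints lands within $13$ of $k$. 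This refinement delivers the announced bound of $26$.
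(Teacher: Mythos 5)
Your overall strategy---project each bicorn $\gamma_j$ to a nearest vertex $N_j=v_{n_j}$ of $\mathcal{G}$ via Proposition \ref{12ball_prop}, bound each jump $|n_{j+1}-n_j|$ by $13+2+13=28$, and run a discrete intermediate value argument---is essentially the paper's own argument read from the other side (the paper starts from a vertex $g$ of $\mathcal{G}$ far from $\mathcal{B}$ and finds a consecutive pair of bicorns straddling it). Up to the bound $14+13=27$ your write-up is sound, and you correctly identify that the whole difficulty is the squeeze from $27$ to $26$. Your first case ($d_F(\gamma_{j^*},\gamma_{j^*+1})\leq 1$, jump $\leq 27$, hence one half-jump $\leq 13$) is fine. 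The second case, however, does not work. The auxiliary curve $c$ disjoint from $\gamma_{j^*}$ and $\gamma_{j^*+1}$ is only guaranteed to lie within $1+13=14$ of $\mathcal{G}$, and nothing forces its nearest vertex $N_c$ to ``interpolate'' between $N_{j^*}$ and $N_{j^*+1}$; moreover the two refined pieces are not shorter, since $d_F(N_{j^*},N_c)\leq 13+1+14=28$, the same bound you started with. Worse, $c$ is not a member of $\mathcal{B}$, so even when $v_k$ lands within $14$ of $N_c$ you only obtain $d_F(v_k,\mathcal{B})\leq 14+14+1=29$. In the extremal configuration ($|k-n_{j^*}|=|k-n_{j^*+1}|=14$ with every triangle inequality tight), $c$ is the midpoint of one geodesic from $N_{j^*}$ to $N_{j^*+1}$ and $v_k$ is the midpoint of another, and comparing midpoints of distinct geodesics requires hyperbolicity, which yields far worse constants. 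So the refinement step is a genuine gap.

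For what it is worth, the paper's proof reaches exactly the same juncture: it produces $g_i,g_j\in\mathcal{G}$ with $d_F(g_i,g_j)\leq 28$ and consecutive bicorns within $13$ of each, and then concludes $d_F(g,\gamma_k)\leq \frac{28}{2}+12=26$; the appearance of $12$ rather than $13$ in that last sum is precisely the point your argument also needs and is asserted rather than derived. Your skeleton cleanly certifies the constant $27$; to actually reach $26$ one must rule out the configuration where $d_F(N_{j^*},\gamma_{j^*})=d_F(\gamma_{j^*+1},N_{j^*+1})=13$, the jump is exactly $28$, and $k$ sits exactly in the middle---an idea that your proposed case analysis does not supply.
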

\begin{proof}
Pick any vertex $g$ in $\mathcal{G}$. 
We may assume that $g$ is not contained in the $13$-neighborhood of $\mathcal{B}$. 
Pick the vertices $g_i$ and $g_j$ of $\mathcal{G}$ so that $g_i$ and $g_j$ are exactly $13$ away from $\mathcal{B}$ and nearest to $g$. 
Since all bicorn curves are contained in the 13-neighbourhood of $\mathcal{G}$, there exist bicorn curves $\gamma_i, \gamma_j \in \mathcal{B}$ such that $d_{F}(\gamma_i, g_i) \leq 13$ and $d_{F}(\gamma_j, g_j) \leq 13$, and that $d_{F}(\gamma_i, \gamma_j) \leq 2$. 
By the triangle inequality, we have $d_{F} (g_i, g_j) \leq 13+ 13+ 2 = 28$. 
This implies $d_{F} (g, \gamma_k) \leq \frac{28}{2} + 12 = 26$ for $k = i$ or $k = j$, as desired. 
\end{proof}

\begin{remark} \label{r_haus}
An alternative proof of Proposition \ref{12ball_prop} can be obtained from the Bowditch's criterion \cite[Proposition 3.1]{B14}. 
We can estimate the Hausdorff distance between a bicorn sequence and a geodesic as follows. 
Take $G_2(F)$ to be the second augmented curve graph of $F$. 
By \cite[Proposition 3.1]{B14}, the Hausdorff distance in $G_2 (F)$ is bounded by $17$. 
Since the identity map from $G_2(F)$ to $\mathscr{C}(F)$ is 2-Lipschitz (by Theorem \ref{log_upp_1}), the Hausdorff distance in the curve graph is bounded by $34$. 
We note that this is larger than the bound given in Corollary \ref{Hausdorff_dist_cor}. 
\end{remark}

%%%%The bounded Geodesic Image Theorem%%%%
\section{An alternative proof of the bounded geodesic image theorem: non-annular projections} \label{non-annular}

We say that $F'$ is a {\it subsurface} of $F$ if $F'$ is a compact two-dimensional submanifold of $F$. 
Suppose that $F'$ is a subsurface of $F$ which is neither homeomorphic to a M\"obius band nor an annulus. 
The {\it arc and curve graph}  $\mathscr{A}\mathscr{C} (F')$ of $F'$ is defined to be the graph whose vertices are the homotopy classes of proper arcs and closed curves on $F'$ which are essential in $F'$, and two vertices span an edge if they have disjoint representatives. 
Here, any homotopy of two proper arcs preserve the boundary components of $F'$. 
However, the endpoints of proper arcs can slide the boundary of $F'$ via homotopy. 

The {\it subsurface projection} $\pi_{F'} \colon \mathscr{C}(F) \to \mathfrak{P}(\mathscr{A}\mathscr{C} (F'))$ can be defined as follows. 
Let $\alpha$ be an essential simple closed curve on $F$ which is transversal to $\partial F'$. 
By deleting bigons, $\alpha$ can be homotoped to an essential simple closed curve $\alpha'$ such that $\alpha'$ and the components of $\partial F'$ are in minimal position. 
We set  $\pi_{F'} (\alpha) = F' \cap \alpha'$. 
If $\alpha'$ is disjoint from $F'$, the subsurface projection of $\alpha$ must be empty. 
Otherwise, it is either $ \alpha' $ itself or a set of proper arcs on $F'$. 
We say that $\alpha$ {\it cut} $F'$ if $\pi_{F'} (\alpha)$ is not empty. 
We denote the usual length metric of $\mathscr{A}\mathscr{C}(F')$ by $d_{F'}$. 

We first show a criterion for ensuring that the subsurface projections of an essential simple closed curve and a bicorn curve derived from it share a common proper arc. 

\begin{lemma}\label{bc_proj}
Let $\gamma_1, \gamma_2$ be essential simple closed curves on $F$, and let $\beta$ be a component of $\partial F'$. 
Suppose that $b = a_1 \cup b_1$ is a $(\gamma_1, \gamma_2)$-bicorn curve, where $a_1$ and $b_1$ are arcs of $\gamma_1$ and $\gamma_2$, respectively. 
If $a_1$ (resp.\ $b_1$) intersects $\beta$ at least thrice, then $\pi_{F'}(\gamma_1) \cap \pi_{F'}(b)$ (resp.\ $\pi_{F'}(\gamma_2) \cap \pi_{F'}(b) )$ is not empty.  
\end{lemma}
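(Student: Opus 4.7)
The plan is to exhibit an essential proper arc of $F'$ that lies simultaneously on $\gamma_1$ (so it represents a vertex of $\pi_{F'}(\gamma_1)$) and on $b$ (so it represents a vertex of $\pi_{F'}(b)$). First I would place $\gamma_1$, $\gamma_2$, and the components of $\partial F'$ pairwise in minimal position, and in general position so that the two corner points of $b$ (the points of $\partial a_1 = \partial b_1$) avoid $\partial F'$. Since $a_1$ meets $\beta$ at $k \geq 3$ points, it meets $\partial F'$ at $m \geq k \geq 3$ points in total, and these intersections cut $a_1$ into $m+1$ subarcs. Of the $m-1 \geq 2$ interior subarcs (those bounded by two consecutive $\partial F'$-crossings), consecutive ones lie on opposite sides of $\partial F'$, so at least one lies entirely in $F'$; call it $\sigma$.

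I would then verify that $[\sigma] \in \pi_{F'}(\gamma_1)$. Since $\sigma \subset a_1 \subset \gamma_1$ and $\gamma_1$ is in minimal position with $\partial F'$, the arc $\sigma$ cannot be boundary parallel in $F'$: any such parallelism would give a bigon between $\gamma_1$ and $\partial F'$. Hence $\sigma$ is an essential proper arc of $F'$, and its isotopy class lies in $\pi_{F'}(\gamma_1)$.

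The main obstacle is to show that $[\sigma] \in \pi_{F'}(b)$, since $b$ need not itself be in minimal position with $\partial F'$. The key observation is that any bigon between $b$ and $\partial F'$ whose $b$-side lies entirely within $a_1$ (resp.\ $b_1$) would already be a bigon between $\gamma_1$ (resp.\ $\gamma_2$) and $\partial F'$, contradicting minimal position. Therefore the $b$-side of every bigon of $b$ with $\partial F'$ must contain a corner of $b$ in its interior. Since $\sigma$ is a subarc of $a_1$ with endpoints on $\beta$ and avoids the corners of $b$ by general position, $\sigma$ is disjoint from the $b$-side of every such bigon. A bigon-removing isotopy of $b$ can therefore be chosen to fix $\sigma$, which establishes that $\sigma$ is a component of the intersection of a minimal-position representative of $b$ with $F'$, so $[\sigma] \in \pi_{F'}(b)$. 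The $b_1$ case follows by the symmetric argument with the roles of $(\gamma_1, a_1)$ and $(\gamma_2, b_1)$ exchanged.
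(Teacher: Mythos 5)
Your pigeonhole step is essentially the paper's: the paper counts the (at least three) arcs of $a_1$ issuing from $\beta$ into $F'$ and observes that at most two of them can terminate at an endpoint of $a_1$, which is the same bookkeeping as your count of interior subarcs of $a_1$ cut along $\partial F'$. Your verification that $\sigma$ is essential (via minimal position of $\gamma_1$ and $\partial F'$) is also fine. You are right that the only delicate point is that $b$ itself need not be in minimal position with $\partial F'$; the paper dismisses this with ``clearly,'' whereas you attempt to justify it, so that attempt is what needs scrutiny.

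That justification has a gap. From ``the $b$-side of every bigon contains a corner'' you conclude that $\sigma$ is \emph{disjoint} from every such $b$-side; but $\sigma$ and the $b$-side $\tau$ are two distinct components of $b$ cut along $b \cap \partial F'$, so while their interiors are disjoint they may share an endpoint. This actually occurs in the worst case: if $\sigma$ is the interior subarc of $a_1$ adjacent to the terminal subarc of $a_1$ containing a corner $p$, then the component $\tau$ of $b$ cut along $\partial F'$ that contains $p$ lies in $F \setminus F'$ and shares an endpoint with $\sigma$. If $\tau$ cobounds a bigon with $\partial F'$ in $F \setminus F'$ --- a disk bounded by an arc of $\gamma_1$, an arc of $\gamma_2$ and an arc of $\partial F'$, which is perfectly consistent with the \emph{pairwise} minimal position of $\gamma_1$, $\gamma_2$ and $\partial F'$ --- then removing that bigon pushes $\tau$ into $F'$ and merges $\sigma$, $\tau$ and the component of $b \cap F'$ on the far side of $\tau$ into a single proper arc, namely the concatenation of $\sigma$ with an arc running parallel to $\partial F'$ and with another essential arc. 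That concatenation need not be isotopic to $\sigma$, so the bigon-removing isotopy cannot in general be ``chosen to fix $\sigma$,'' and $[\sigma] \in \pi_{F'}(b)$ does not follow as written. To close the gap you would need, for instance, to choose $\sigma$ so that it cannot become adjacent to a corner-containing component under the iterated bigon removals (and bound how far those components can propagate), or to argue with geodesic representatives; as it stands, the step you yourself flagged as the main obstacle is not established.
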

\begin{proof}
Assume that $a_1$ intersects $\beta$ at least thrice. 
Then we have three arcs of $a_1$ which issue from $\beta$ and are contained in $F'$. 
Since $\# \partial b_1= 2$, at most two such arcs end at the outside of $\beta$. 
Hence, there exists an arc $a' \subset a_1$ such that $(a', \partial a') \subset (F', \partial F')$. 
This clearly implies that $a' \in \pi_{F'}(\gamma_1) \cap \pi_{F'}(b)$. 
\end{proof}

\begin{lemma} \label{bgit_far_away}
Let $\beta$ be a component of $\partial F'$. 
Suppose that $\mathcal{G}$ is a geodesic in $\mathscr{C}(F)$ whose all vertices $\alpha$ satisfy $d_F(\alpha, \beta) \geq 18$. 
For every pair of vertices $\gamma_1, \gamma_2 \in \mathcal{G}$ and every component $c$ of $\pi_{F'} (\gamma_1)$, there exists a component $c'$ of $\pi_{F'} (\gamma_2)$ such that $d_{F'} (c, c') \leq 3$. 
\end{lemma}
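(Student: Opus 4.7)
The plan is to analyze an $(\gamma_1,\gamma_2)$-bicorn sequence, using the distance-to-$\beta$ hypothesis to align the two subsurface projections. First, since the subpath of $\mathcal{G}$ from $\gamma_1$ to $\gamma_2$ is itself a geodesic, Corollary \ref{12ball_cor} implies that every $(\gamma_1,\gamma_2)$-bicorn curve $b$ lies within distance $13$ of $\mathcal{G}$. Combined with the hypothesis that every vertex of $\mathcal{G}$ has distance $\geq 18$ from $\beta$, the triangle inequality gives $d_F(b,\beta) \geq 5$. Theorem \ref{log_upp_1} converts this to $i(b,\beta) \geq 7$. Writing $b = a \cup b'$ with $a$ a $\gamma_1$-arc and $b'$ a $\gamma_2$-arc, the two gluing points $\partial a = \partial b' \subset \gamma_1 \cap \gamma_2$ are off $\beta$ by general position, so $\#(a \cap \beta) + \#(b' \cap \beta) \geq 7$.

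Next, I would take an $(\gamma_1,\gamma_2)$-bicorn sequence $\gamma_1 = b_0, b_1, \ldots, b_n = \gamma_2$ from Lemma \ref{bicorn_path_exists_2}, and set $x_i = \#(a_i \cap \beta)$ and $y_i = \#(b'_i \cap \beta)$. Then $x_i$ is weakly decreasing, $y_i$ is weakly increasing, and $x_i + y_i \geq 7$ for every $i$. Letting $j$ be the largest index with $x_j \geq 3$, we get $x_{j+1} \leq 2$ and hence $y_{j+1} \geq 5 \geq 3$. Lemma \ref{bc_proj} then produces common components $c_0 \in \pi_{F'}(\gamma_1) \cap \pi_{F'}(b_j)$ and $c'_0 \in \pi_{F'}(\gamma_2) \cap \pi_{F'}(b_{j+1})$.

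For the distance bound, $b_j$ meets $\beta$ transversely in an even number of points, so $\#(b_j \cap \beta) \geq 8$, and $\pi_{F'}(b_j)$ consists of at least $4$ pairwise disjoint proper arcs in $F'$. Since $i(b_j, b_{j+1}) \leq 1$, at most one arc of $\pi_{F'}(b_j)$ can meet at most one arc of $\pi_{F'}(b_{j+1})$, so there exists an arc $d \in \pi_{F'}(b_j)$ disjoint from $c'_0$. Since $d$ is also disjoint from $c_0$ (both lie in the disjoint family $\pi_{F'}(b_j)$), we get $d_{F'}(c_0, c'_0) \leq 2$ via the length-$2$ path $c_0, d, c'_0$ in $\mathscr{A}\mathscr{C}(F')$. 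For any component $c$ of $\pi_{F'}(\gamma_1)$, the pairwise disjointness of $\pi_{F'}(\gamma_1)$ gives $d_{F'}(c, c_0) \leq 1$, so the triangle inequality yields $d_{F'}(c, c'_0) \leq 3$; setting $c' := c'_0$ finishes the argument.

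The main obstacle I anticipate is justifying the bypass arc $d$: specifically, that $\pi_{F'}(b_j)$ contains a component disjoint from $c'_0$. This uses both $\#(b_j \cap \beta) \geq 8$ (which supplies at least four disjoint arcs in $\pi_{F'}(b_j)$) and $i(b_j, b_{j+1}) \leq 1$ (which restricts to at most one the number of those arcs that can meet $c'_0$); the delicate point is to verify that the unique possible shared intersection between $b_j$ and $b_{j+1}$, even if it lies inside $F'$, affects only a single arc on each side. The final constant $3$ then reflects one step to reach $c_0$ from an arbitrary component $c$ of $\pi_{F'}(\gamma_1)$ plus two more steps to pass from $c_0$ to $c'_0$ through $d$.
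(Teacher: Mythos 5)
Your argument follows the same route as the paper's proof: bicorn curves of a $(\gamma_1,\gamma_2)$-bicorn sequence stay within $13$ of $\mathcal{G}$, hence at least $5$ from $\beta$, hence intersect $\beta$ many times; monotonicity of the $\gamma_1$- and $\gamma_2$-arcs produces consecutive bicorn curves $b_j, b_{j+1}$ whose $\gamma_1$-arc and $\gamma_2$-arc respectively meet $\beta$ at least thrice; Lemma \ref{bc_proj} then yields the common components $c_0$ and $c_0'$; and the final bound is $1+2=3$. (The paper gets $i(b,\beta)\geq 9$ from Proposition \ref{8to4} rather than $\geq 7$ from Theorem \ref{log_upp_1}; either suffices, though your use of Theorem \ref{log_upp_1} tacitly assumes $i(b,\beta)\geq 2$, which you should dispatch by noting $i(b,\beta)\leq 1$ forces $d_F(b,\beta)\leq 2$.) The one genuinely different step is how you bound $d_{F'}(c_0,c_0')\leq 2$: the paper performs a unicorn-arc surgery on the two arcs $c_0, c_0'$ (which intersect at most twice) to produce a third arc disjoint from both, whereas you use a pigeonhole argument, finding a bypass arc $d$ in the disjoint family $\pi_{F'}(b_j)$ that misses $c_0'$. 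Your version works and is arguably cleaner, but two points need care. First, your parity claim is wrong as stated: a single boundary component $\beta$ of a subsurface need not be separating or null-homologous, so $\#(b_j\cap\beta)$ can be odd; the correct statement is that $\#(b_j\cap\partial F')\geq \#(b_j\cap\beta)\geq 7$ is even because it equals twice the number of arcs of $b_j\cap F'$, which still gives the $\geq 4$ disjoint arcs you need. Second, as you yourself flag, the claim that at most one arc of $\pi_{F'}(b_j)$ meets $c_0'$ requires representatives of $b_j$, $b_{j+1}$ and $\partial F'$ realizing all pairwise minimal positions simultaneously (e.g.\ geodesic representatives); this is the same level of looseness as the paper's ``we may assume $\#(c_K\cap c_{K+1})\leq 2$,'' so it is acceptable, but it should be said.
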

\begin{proof}
Let $\gamma_1, \gamma_2$ be any vertex of $\mathcal{G}$, and $c$ a component of $\pi_{F'} (\gamma_1)$. 
We have a $(\gamma_1, \gamma_2)$-bicorn sequence $\mathcal{B}$ guaranteed by Lemma \ref{bicorn_path_exists_2}. 
Since $\mathcal{B}$ is contained in the $13$-neighborhood of $\mathcal{G}$, each $(\gamma_1, \gamma_2)$-bicorn curve $b$ is at least $5$ away from $\beta$. 
By Proposition \ref{8to4}, we have that $i (\beta, b) \geq 9$.  
Therefore, either the $\gamma_1$-arc or $\gamma_2$-arc of each $(\gamma_1, \gamma_2)$-bicorn curve intersects $\beta$ at least thrice. 
Also, both of $\gamma_1$ and $\gamma_2$ intersect $\beta$ at least thrice. 
Hence, we can find a pair of consecutive $(\gamma_1, \gamma_2)$-bicorn curves $x_K$ and $x_{K+1}$ in $\mathcal{B}$ such that both of the $\gamma_1$-arc of $x_K$ and $\gamma_2$-arc of $x_{K+1}$ intersects $\beta$ at least thrice. 
By Lemma \ref{bc_proj}, $\pi_{F'}(\gamma_1) \cap \pi_{F'}(x_K)$ (resp.\ $\pi_{F'}(\gamma_2) \cap \pi_{F}(x_{K+1})$) contains an essential proper arc $c_{K}$ (resp.\ $c_{K+1}$). 
Since $i(x_{K}, x_{K+1}) \leq 2$, we may assume that $\# (c_{K} \cap c_{K+1}) \leq 2$. 
Then by concatenating ends of $c_{K}$ and $c_{K+1}$, we can construct a unicorn arc $u$ between $c_{K}$ and $c_{K+1}$ (see \cite{HPW15} for the detail of unicorn arcs). 
Since $u$ has a representative disjoint from $c_{K} \cup c_{K+1}$, we have $d_{F'}(c_{K}, c_{K+1}) \leq 2$. 
Since any pair of the elements in $\pi (\gamma_i)$ is adjacent, we have $d_{F'}(c, c_{K+1}) \leq d_{F'} (c, c_{K}) + d_{F'} (c_{K}, c_{K+1}) \leq 1 + 2$, as desired. 
\end{proof}

\begin{lemma} \label{bgit_far_away_2}
Let $\beta$ be a component of $\partial F'$, $t$ an integer between $4$ and $8$. 
Suppose that $\mathcal{G}$ is a geodesic of length $t$ in $\mathscr{C}(F)$ and issues from a vertex $g \in \mathscr{C}(F)$ with $d_{F}(g, \beta) \geq 10 + t$. 
For every pair of vertices $\gamma_1, \gamma_2 \in \mathcal{G}$ and every component $c$ of $\pi_{F'} (\gamma_1)$, there exists a component $c'$ of $\pi_{F'} (\gamma_2)$ such that $d_{F'} (c, c') \leq 3$. 
\end{lemma}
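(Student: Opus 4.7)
The plan is to mimic the proof of Lemma \ref{bgit_far_away} almost verbatim, replacing the uniform lower bound $d_F(\alpha,\beta)\geq 18$ used there with a length-based bound coming from the hypothesis that $\mathcal{G}$ is short. First I would take a $(\gamma_1,\gamma_2)$-bicorn sequence $\mathcal{B}=(x_0=\gamma_1,x_1,\ldots,x_N=\gamma_2)$ provided by Lemma \ref{bicorn_path_exists_2} (the degenerate cases $i(\gamma_1,\gamma_2)\leq 2$ are an easy direct check using the toy-case bicorn curve described after Lemma \ref{1-3rd} together with Lemma \ref{bc_proj}).

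The key numerical step is to prove that $i(b,\beta)\geq 9$ for every $b\in\mathcal{B}$. Every vertex of $\mathcal{G}$ is within distance $t$ of $g$, so $d_F(\cdot,\beta)\geq(10+t)-t=10$ throughout $\mathcal{G}$. The subgeodesic of $\mathcal{G}$ joining $\gamma_1$ and $\gamma_2$ has length $m\leq t\leq 8$, and applying Lemma \ref{dist_bc} to this subgeodesic and $b$ yields $d_F(b,\mathcal{G})\leq 2\lceil\log_2 m\rceil-1\leq 5$. The triangle inequality then gives $d_F(b,\beta)\geq 10-5=5$, and the contrapositive of Proposition \ref{8to4} forces $i(b,\beta)\geq 9$, as claimed.

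With this in hand, the remainder matches Lemma \ref{bgit_far_away} step by step. Writing $x_i=a_i\cup b_i$ with $\gamma_1$-arc $a_i$ and $\gamma_2$-arc $b_i$, the nesting in property (I) of Lemma \ref{bicorn_path_exists_2} makes $\#(a_i\cap\beta)$ non-increasing in $i$. The largest $K$ with $\#(a_K\cap\beta)\geq 3$ exists (because $a_0=\gamma_1$ meets $\beta$ at least $9$ times) and satisfies $K<N$; maximality combined with $i(x_{K+1},\beta)\geq 9$ forces $\#(b_{K+1}\cap\beta)\geq 7\geq 3$. Lemma \ref{bc_proj} now supplies essential proper arcs $c_K\in\pi_{F'}(\gamma_1)\cap\pi_{F'}(x_K)$ and $c_{K+1}\in\pi_{F'}(\gamma_2)\cap\pi_{F'}(x_{K+1})$, and since $i(x_K,x_{K+1})\leq 1$ by property (II) of Lemma \ref{bicorn_path_exists_2}, the usual unicorn-arc concatenation yields $d_{F'}(c_K,c_{K+1})\leq 2$. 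As $c$ and $c_K$ are both components of $\pi_{F'}(\gamma_1)$, they are disjoint and hence adjacent in $\mathscr{A}\mathscr{C}(F')$, so setting $c'=c_{K+1}$ gives $d_{F'}(c,c')\leq 1+2=3$.

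The main thing to watch is that the numerics are tight: for $t=8$ we land exactly at $d_F(b,\mathcal{G})\leq 5$, making $d_F(b,\beta)\geq 5$ the borderline case for invoking Proposition \ref{8to4}. This is precisely why the hypothesis $d_F(g,\beta)\geq 10+t$ cannot be loosened without breaking the chain, and verifying that the ten-unit slack survives along the whole subgeodesic is the only delicate point in the argument.
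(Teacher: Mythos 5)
Your proof is correct and follows essentially the same route as the paper: bound the distance from each bicorn curve to the (sub)geodesic by $5$ via Lemma \ref{dist_bc}, deduce $d_F(b,\beta)\geq 5$ and hence $i(b,\beta)\geq 9$ from Proposition \ref{8to4}, and then repeat the argument of Lemma \ref{bgit_far_away}. You actually spell out more of the final step (the existence of the consecutive pair $x_K,x_{K+1}$ and the degenerate case $i(\gamma_1,\gamma_2)\leq 2$) than the paper does, which only helps.
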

\begin{proof}
Pick any vertices $h_1, h_2$ of $\mathcal{G}$ and a $(h_1, h_2)$-bicorn sequence $\mathcal{B}$. 
We first consider the case where $5 \leq t \leq 8$. 
Since the length of $\mathcal{G}$ is at most $8$, $\mathcal{B}$ is contained in the $5$-neighborhood of $\mathcal{G}$ by Lemma \ref{dist_bc}. 
Moreover, since every vertex of $\mathcal{G}$ is at least $10$ away from $\beta$, each $(h_1, h_2)$-bicorn curve in $\mathcal{B}$ is at least $5$ away from $\beta$. 
By the same argument as in the proof of Lemma \ref{bgit_far_away}, the assertion holds. 

In the case where $t = 4$, $\mathcal{B}$ is contained in the $3$-neighborhood of $\mathcal{G}$ by Lemma \ref{dist_bc}. 
Hence, the assertion holds. 
\end{proof}

\begin{proof}[Proof of Theorem \ref{bgit} (1)]
Let $\beta$ be a boundary component of $F'$. 
We may assume that $\mathcal{G}$ is not contained in the $17$-neighborhood of $\beta$. 
Pick any vertices $\gamma_1, \gamma_2$ in $\mathcal{G}$. 
We first consider the case where $d_{F} (\gamma_i, \beta) \geq 18$ ($i=1,2$).  
Let $g_1$ and $g_2$ be the vertices in $\mathcal{G}$ such that $d_{F} (g_i, \beta) = 18$, and that $g_i$ is nearest to $\gamma_i$. 
By Lemma \ref{bgit_far_away}, it follows that every component of $\pi_{F'} (\gamma_i)$ is at most 3 away from a component of $g_i$  (for $i=1,2$). 
We desire to estimate the bound for $d_{F'}(\pi(g_1), \pi(g_2))$. 
Since $d_F(g_1, g_2) = 36$, the length of the subpath $\mathcal{P}$ of $\mathcal{G}$ which connects $g_1$ and $g_2$ is equal to $36$. 
Since $d_{F} (g_i, \beta) = 18$ ($i=1,2$), the head and bottom subpaths of $\mathcal{P}$ of length $8$ satisfy the assumption of Lemma \ref{bgit_far_away_2}. 
By applying Lemma \ref{bgit_far_away_2} to these subpaths, we have that each pair of the components in $\pi_{F'}(g_1) \cup \pi_{F'}(g_2)$ is distanced at most $3+ (36 - 8 \times 2) + 3 = 26$. 
Consequently, we have $\mathrm{diam}_{F'}(\pi(\gamma_1) \cup \pi(\gamma_2)) \leq 3+26 +3 = 32$. 

We next treat the case where $d_{F} (\gamma_1, \beta) \geq 18$ and $d_{F} (\gamma_2, \beta) \leq 17$. 
By the same argument as above, we have $\mathrm{diam}_{F'}(\pi(\gamma_1) \cup \pi(\gamma_2)) \leq 3 + 3 + 10  + 10 + 3 \leq 29$. 

In the remainder case, the geodesic has length $\leq 17 + 17 = 34$, and so it follows from Lemma \ref{bgit_far_away_2} that $\mathrm{diam}_{F'}(\pi(\gamma_1) \cup \pi(\gamma_2)) \leq 3 + 10 + 10 + 3 = 26$. 
We thus conclude that the diameter of the non-annular subsurface projection is bounded above by $32$.  
\end{proof}

\section{An alternative proof of the bounded geodesic image theorem: annular projections} \label{annular}

When a subsurface $A$ of $F$ is an essential annulus, we need to employ a complicated definition of $\mathscr{AC}(A)$ (in order to make $\mathscr{AC}(A)$ quasi-isometric to $\mathbb{Z}$). 
Since the core curve of $A$ represents an non-trivial element of $\pi_1(F)$, we have the annular cover $Y$ of $F$ to which $A$ lifts homeomorphically. 
By using a hyperbolic metric of $F$ with finite area, we are able to have a compactification $\hat{Y}$ of $Y$. 
Then the {\it arc and curve graph} $\mathscr{AC}(A)$ is the graph whose vertices are the isotopy classes of the proper arcs of $\hat{Y}$ joining exactly two boundary components, and two vertices are connected by an edge if and only if they have representatives whose interiors are disjoint. 
Here, the isotopy in $\hat{Y}$ must fix the points of $\partial \hat{Y}$. 
As shown in \cite{MM00}, $\mathscr{AC}(A)$ is quasi-isometric to $\mathbb{Z}$, and that $d_{A}(\alpha, \beta) = 1 + |\alpha \cdot \beta|$, where $|\alpha \cdot \beta|$ is the algebraic intersection number of $\alpha$ and $\beta$. 
For the homotopy class of an essential simple closed curve $\alpha$,  the subsurface projection $\pi_{A} (\alpha)$ is defined to be the set of the fibers of $\alpha$ which connect the boundary components of $\hat{Y}$. 
Note that $\pi_A (\alpha)$ consists of pairwise adjacent vertices of $\mathscr{AC}(A)$. 
We say that $\alpha$ {\it cut} $A$ if $\pi_A (\alpha)$ is not empty. 

We use the following lemma due to Webb. 

\begin{lemma}{\cite[Lemma 4.1.3]{W14}} \label{Webb_lemma}
Let $Z$ be a non-annular subsurface of $F$, which contains $A$ as an essential subsurface. 
Suppose that $\alpha_0, \alpha_1, \ldots, \alpha_L$ are essential simple closed curves on $F$, whose subsurface projections on $Z$ have elements $a_i \subset \alpha_i $ such that $d_{Z}(a_i, a_{i+1}) = 1$. 
Then we have
$$d_{A} (\pi_{A} (\alpha_0), \pi_{A} (\alpha_{L})) \leq L + 4. $$ 
\end{lemma}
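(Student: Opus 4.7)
The plan is to establish the bound by induction on $L$, combining a one-step estimate that controls the shift of the annular projection between consecutive curves with some bookkeeping at the ends of the chain that absorbs the additive constant $+4$.

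As a preliminary reduction, I would observe that $\pi_A(\alpha_0)$ and $\pi_A(\alpha_L)$ each consist of pairwise adjacent vertices in $\mathscr{AC}(A)$ (so each has diameter at most $1$), and hence it suffices to bound $d_A$ between a single chosen element of $\pi_A(\alpha_0)$ and a single chosen element of $\pi_A(\alpha_L)$. I would also treat intermediate indices for which $\alpha_i$ fails to cut $A$ separately: such indices can be excised from the chain, and the resulting junctions contribute only bounded error, which is what feeds the additive $+4$ in the final bound.

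The heart of the argument is the one-step estimate: assuming both $\alpha_i$ and $\alpha_{i+1}$ cut $A$ and that $a_i, a_{i+1}$ are realized disjointly in $Z$, we should have $d_A(\pi_A(\alpha_i), \pi_A(\alpha_{i+1})) \leq 2$. To prove this, I would pass to the annular cover $\hat{Y}$ of $F$ corresponding to the core of $A$ and invoke the formula $d_A(u,v) = 1 + |u \cdot v|$ recalled earlier in the section. Since $A \subset Z$ is essential, the lift of $a_i \cup a_{i+1}$ to $\hat{Y}$ is again a disjoint collection of arcs. Representatives of $\pi_A(\alpha_i)$ and $\pi_A(\alpha_{i+1})$ can be chosen so that the arcs crossing both boundary components of $\hat{Y}$ lie over components of $\alpha_i \cap Z$ and $\alpha_{i+1} \cap Z$ close to $a_i$ and $a_{i+1}$, so that disjointness in $Z$ persists in $\hat{Y}$ modulo a uniformly bounded number of extra crossings, forcing the algebraic intersection of the chosen lifts to be at most $1$. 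Telescoping this estimate over $i = 0, 1, \ldots, L-1$ and combining with the boundary slack from the reduction gives the desired bound $d_A(\pi_A(\alpha_0), \pi_A(\alpha_L)) \leq L + 4$.

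The main obstacle I anticipate is the one-step estimate itself, which is of reverse-Behrstock type: the arc $a_i$ is only one component of $\pi_Z(\alpha_i)$, and $\alpha_i$ may wrap many times around the core of $A$ via portions of itself that are invisible to $a_i$. Reconciling this with the annular distance formula requires careful control of the lifted configuration in $\hat{Y}$, specifically of how arcs of $\pi_A(\alpha_i)$ that avoid $a_i$ itself must nevertheless interact with the lifts coming from $a_{i+1}$. It is precisely this control, together with the error incurred when excising indices where $\alpha_i$ does not cut $A$, that calibrates the additive constant $4$ in the final bound.
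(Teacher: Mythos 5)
The paper offers no independent proof of this lemma: it is imported from Webb's thesis, and the stated justification is only that Webb's argument, which rests on a Masur--Schleimer inequality for algebraic intersection numbers of arcs in the annular cover, goes through unchanged for non-orientable $F$. Your blind attempt at a direct proof contains a genuine gap, in fact two.

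First, the arithmetic of your telescoping cannot yield the stated bound. Your one-step estimate is $d_A(\pi_A(\alpha_i),\pi_A(\alpha_{i+1}))\leq 2$, and summing this over the $L$ steps via the triangle inequality gives $2L$, which exceeds $L+4$ for every $L\geq 5$. A slope-one bound requires each step to contribute $+1$, and the mechanism that achieves this is different from what you describe: one routes the chain through lifts of the arcs $a_i$ themselves, observing that since $a_i$ and $a_{i+1}$ are disjoint \emph{in $Z$}, their full preimages in $\hat{Y}$ are disjoint, so whenever $a_i$ and $a_{i+1}$ both cross $A$ essentially, suitable lifts have algebraic intersection number $0$ and hence $d_A=1+0=1$ exactly. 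The exceptional indices, where the distinguished component $a_i$ misses $A$, are then absorbed using the quasi-additivity $|x\cdot z|\leq|x\cdot y|+|y\cdot z|+1$ of algebraic intersection numbers of arcs in $\hat{Y}$ --- precisely the Masur--Schleimer inequality the paper cites --- and that is what the additive $4$ pays for. You never invoke any such additivity, and without it no amount of bookkeeping turns a per-step bound of $2$ into $L+4$. Second, the one-step estimate itself is asserted rather than proved, and it cannot hold in the generality you state it: when $a_i$ does not essentially cross $A$, the disjointness of $a_i$ and $a_{i+1}$ imposes no constraint relating $\pi_A(\alpha_i)$ to $\pi_A(\alpha_{i+1})$, since $\alpha_i$ may twist arbitrarily around the core of $A$ through components of $\alpha_i\cap Z$ other than $a_i$ --- the Behrstock phenomenon you yourself name. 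You correctly flag this as the main obstacle, but the proposed remedy of excising such indices destroys the disjointness hypothesis for the newly adjacent pairs, so the obstacle is not overcome. As it stands the proposal is a plausible outline of where the difficulty lies, not a proof.
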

\begin{proof}
Webb's proof (including Masur--Schleimer's inequality \cite[Line 14]{MS13}) is valid even in the non-orientable cases. 
\end{proof}

\begin{lemma}\label{ann_lemma}
Let $Z$ be a non-annular subsurface of $F$, which contains $A$ as an essential subsurface. 
Let $\beta$ be a component of $\partial Z$. 
Suppose that $\mathcal{G}$ is a geodesic in $\mathscr{C}(F)$ whose all elements are at least 18 away from a component $\beta$. 
For each pair of elements $\gamma_1, \gamma_2 \in \mathcal{G}$, it follows that every component of $\pi_A (\gamma_1)$ is at most $8$ away from a component of $\pi_{A} (\gamma_2)$. 
\end{lemma}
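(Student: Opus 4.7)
The plan is to follow the unicorn-bridging construction inside the proof of Lemma \ref{bgit_far_away}, upgrade the bridging arc to an actual essential simple closed curve on $F$, and then invoke Webb's Lemma \ref{Webb_lemma} to convert the $\mathscr{AC}(Z)$-estimate into the desired $\mathscr{AC}(A)$-bound. Because $Z$ is non-annular with $\beta\subset\partial Z$ and every vertex of $\mathcal{G}$ is at least $18$ away from $\beta$, Lemma \ref{bgit_far_away} applies with $F':=Z$. Running through its proof yields a $(\gamma_1,\gamma_2)$-bicorn sequence $\mathcal{B}$, a pair of consecutive bicorn curves $x_K, x_{K+1}\in\mathcal{B}$, proper arcs
\[
c_K\in \pi_Z(\gamma_1)\cap\pi_Z(x_K),\qquad c_{K+1}\in\pi_Z(\gamma_2)\cap\pi_Z(x_{K+1}),
\]
and a unicorn arc $u$ in $Z$ disjoint from $c_K\cup c_{K+1}$, so that $d_Z(c_K,u)=d_Z(u,c_{K+1})=1$.

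Next I would realize $u$ as a component of the $Z$-projection of an actual essential simple closed curve $\hat{u}$ on $F$. Since $u$ is a properly embedded arc in $Z$ with endpoints on $\partial Z$, I join those endpoints by an embedded arc $v\subset F\setminus\mathrm{int}(Z)$ and set $\hat{u}:=u\cup v$. A short topological check, using $\chi(F)\leq -2$ and that $Z$ is an essential non-annular subsurface, will let me choose $v$ so that $\hat{u}$ is a simple, essential closed curve on $F$ with $u$ still a component of $\pi_Z(\hat{u})$.

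I would then apply Webb's Lemma \ref{Webb_lemma} to the length-$4$ sequence
\[
\alpha_0:=\gamma_1,\ \alpha_1:=x_K,\ \alpha_2:=\hat{u},\ \alpha_3:=x_{K+1},\ \alpha_4:=\gamma_2,
\]
picking arcs $a_i\in\pi_Z(\alpha_i)$ so that each consecutive pair sits at $\mathscr{AC}(Z)$-distance $1$: namely $a_0$ in $\pi_Z(\gamma_1)$ adjacent to but distinct from $c_K$, $a_1=c_K$, $a_2=u$, $a_3=c_{K+1}$, and $a_4$ in $\pi_Z(\gamma_2)$ adjacent to but distinct from $c_{K+1}$ (absorbing any length-$0$ step into a neighboring edge should $\pi_Z(\gamma_i)$ happen to have only one component). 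Webb's conclusion then yields
\[
d_A(\pi_A(\gamma_1),\pi_A(\gamma_2))\leq L+4\leq 4+4=8,
\]
and since all components within a single $\pi_A(\gamma_i)$ are pairwise adjacent in $\mathscr{AC}(A)$, the bound extends to every pair of components of $\pi_A(\gamma_1)$ and $\pi_A(\gamma_2)$, as required.

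The main obstacle is the construction of $\hat{u}$: the unicorn arc $u$ must be closed up in $F\setminus\mathrm{int}(Z)$ to a simple, essential curve on $F$ whose $Z$-projection still contains $u$ as a component. I expect this to rest on standard surface cut-and-paste using $\chi(F)\leq -2$ and the essentiality and non-annularity of $Z$, but a little case analysis is needed when the endpoints of $u$ lie on the same component of $\partial Z$, or when $F\setminus \mathrm{int}(Z)$ has disk or annular components near those endpoints.
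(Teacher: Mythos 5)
Your overall strategy matches the paper's: run the argument of Lemma \ref{bgit_far_away} with $F'=Z$ to get consecutive bicorn curves whose $Z$-projections meet those of $\gamma_1$ and $\gamma_2$, insert one bridging curve in the middle, and feed the resulting length-$4$ chain into Webb's Lemma \ref{Webb_lemma} to get $4+4=8$. But the step you yourself flag as ``the main obstacle'' is a genuine gap, and the paper avoids it rather than solving it. Closing the unicorn arc $u$ up to a simple closed curve $\hat{u}$ with $v\subset F\setminus\mathrm{int}(Z)$ is not always possible: the two endpoints of $u$ may lie on boundary components of $Z$ that are adjacent to \emph{different} components of $F\setminus\mathrm{int}(Z)$, in which case no such $v$ exists; and even when it does, you must still arrange that $\hat{u}$ is essential and that $u$ survives as a component of $\pi_Z(\hat{u})$. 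Moreover, even granting $\hat{u}$, you never check that it \emph{cuts $A$}. Webb's chain of annular projections breaks if an intermediate curve has empty projection to $A$, and $\hat{u}$ has no a priori controlled distance to $\mathcal{G}$ (it need not be disjoint from $x_K$ or $x_{K+1}$ as a curve on $F$), so the hypothesis $d_F(\mathcal{G},\beta)\geq 18$ cannot be brought to bear on it the way you would need.

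The paper's bridging curve is chosen differently, precisely to make both of these points easy. Since $i(x_K,x_{K+1})\leq 2$ and $\chi(F)\leq -2$, the complement of $x_K\cup x_{K+1}$ already contains an essential simple closed curve $\eta$ (this is the $i=2$ case of Lemma \ref{start_log}); disjointness from both bicorn curves on $F$ immediately gives $d_Z(\pi_Z(x_K),\pi_Z(\eta))\leq 1$ and $d_Z(\pi_Z(\eta),\pi_Z(x_{K+1}))\leq 1$, so the chain $\gamma_1,x_K,\eta,x_{K+1},\gamma_2$ still has length $\leq 4$ in $\mathscr{A}\mathscr{C}(Z)$. And because $\eta$ is adjacent to $x_K$, which by Proposition \ref{12ball_prop} lies within $13$ of $\mathcal{G}$, all of $x_K,\eta,x_{K+1}$ lie within $14$ of $\mathcal{G}$, hence at least $3$ from the core curve of $A$ (as $\beta$ is adjacent to that core and $d_F(\mathcal{G},\beta)\geq 18$); this forces nonempty projection to $A$ (and to $Z$) for every term of the chain. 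If you replace your $\hat{u}$ with this $\eta$ and add the distance-to-the-core verification, your argument becomes the paper's proof.
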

\begin{proof}
Pick two vertices $\gamma_1, \gamma_2 \in \mathcal{G}$. 
By the proof of Lemma \ref{bgit_far_away}, there exist $(\gamma_1, \gamma_2)$-bicorn curves $b_1 , b_2$ such that $\pi_Z (\gamma_1) \cap \pi_Z (b_1)$ and $\pi_Z (b_2) \cap \pi_Z (\gamma_2)$ are not empty. 
Since $i (b_1, b_2) \leq 2$, we have an essential simple closed curve $\eta$ on $F$ such that $\eta$ is disjoint from $b_1 \cup b_2$ by Lemma \ref{start_log}. 
Since $\mathcal{G}$ is at least 18 away from $\beta$ and since $\beta$ is adjacent to the core curve of $A$ in $\mathscr{C}(F)$, we see that all of $b_1, \eta, b_2$ is at least 3 away from the core curve of $A$ by Proposition \ref{12ball_prop}. 
This means that each element of the sequence $\gamma_1, b_1, \eta, b_2, \gamma_2$ has non-empty annular projection. 
Moreover, the subsurface projection of the sequence onto $Z$ forms a path of length $\leq 4$ in $\mathscr{A}\mathscr{C} (Z)$. 
By Lemma \ref{Webb_lemma}, we obtain the assertion. 
\end{proof}

We prove Theorem \ref{bgit} (2). 

\begin{proposition}
The annular subsurface projection of every geodesic in $\mathscr{C}(F)$ whose all members cut the annulus $A$ has diameter $\leq 52$. 
\label{ann_proj_bd}
\end{proposition}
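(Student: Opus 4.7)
The plan is to mirror the proof of Theorem \ref{bgit}(1) with two substitutions: Lemma \ref{ann_lemma} replaces Lemma \ref{bgit_far_away} (the constant $3$ becoming $8$), and a short-geodesic annular analog of Lemma \ref{bgit_far_away_2} (again with bound $8$) replaces that lemma. First I would fix a non-annular subsurface $Z$ of $F$ that contains $A$ essentially and has a boundary component $\beta$. Such a $Z$ exists because $\chi(F) \leq -2$ forces the complement of a collar of the core curve $\alpha_A$ of $A$ to carry an essential simple closed curve disjoint from $\alpha_A$; taking such a curve as $\beta$ and $Z$ to be the component of $F \setminus \beta$ containing $A$ does the job. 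By construction $\beta$ is disjoint from $\alpha_A$, so any simple closed curve at distance $\geq 2$ from $\beta$ in $\mathscr{C}(F)$ cuts $A$.

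Next I would record the annular analog of Lemma \ref{bgit_far_away_2}: for an integer $t \in \{4,\ldots,8\}$, if $\mathcal{G}'$ is a geodesic of length $t$ issuing from a vertex at distance $\geq 10+t$ from $\beta$, then every two vertices of $\mathcal{G}'$ have annular projections within distance $8$ in $\mathscr{A}\mathscr{C}(A)$. The proof copies that of Lemma \ref{bgit_far_away_2} up to the application of Lemma \ref{bc_proj}: by Lemma \ref{dist_bc} any $(h_1,h_2)$-bicorn sequence on $\mathcal{G}'$ lies within $5$ of $\mathcal{G}'$, so each bicorn curve is at distance $\geq 5$ from $\beta$, hence $\geq 4$ from $\alpha_A$, so it cuts $A$; one then runs the Webb-lemma argument of Lemma \ref{ann_lemma} to produce a length-$4$ path in $\mathscr{A}\mathscr{C}(Z)$ between $\pi_Z(\gamma_1)$ and $\pi_Z(\gamma_2)$ whose vertices all cut $A$, and Lemma \ref{Webb_lemma} returns the bound $4+4=8$.

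With these tools the proof proceeds exactly as in Theorem \ref{bgit}(1). In the principal case, where both endpoints $\gamma_1,\gamma_2 \in \mathcal{G}$ satisfy $d_F(\gamma_i,\beta) \geq 18$, pick $g_i \in \mathcal{G}$ nearest to $\gamma_i$ with $d_F(g_i,\beta) = 18$ and let $h,h'$ be the vertices $8$ steps from $g_1,g_2$ along the subpath $\mathcal{P}$ from $g_1$ to $g_2$ (of length at most $36$). Lemma \ref{ann_lemma} gives $d_A(\pi_A(\gamma_i),\pi_A(g_i)) \leq 8$, the annular analog of Lemma \ref{bgit_far_away_2} gives $d_A(\pi_A(g_1),\pi_A(h)) \leq 8$ and $d_A(\pi_A(g_2),\pi_A(h')) \leq 8$, and since every vertex of the middle subpath from $h$ to $h'$ cuts $A$ while consecutive vertices of $\mathcal{G}$ are disjoint in $F$ (so their annular projections are adjacent in $\mathscr{A}\mathscr{C}(A)$), the middle contributes at most $20$. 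Summing yields $8+8+20+8+8=52$. The remaining cases, where one or both of $\gamma_1,\gamma_2$ lie in the $17$-neighborhood of $\beta$, yield strictly smaller totals by the same decomposition.

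The principal obstacle is producing the auxiliary pair $(Z,\beta)$ in full generality for both orientable and non-orientable closed $F$ with $\chi(F) \leq -2$, and keeping track of the constants so that the worst-case subcase really is $52$ rather than slightly larger. A secondary point is verifying from the definition via the annular cover that adjacent essential simple closed curves in $F$ both cutting $A$ give adjacent vertices in $\mathscr{A}\mathscr{C}(A)$; this is immediate but should not be skipped.
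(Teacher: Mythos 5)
Your proposal is correct and follows the paper's proof in all essentials: fix a non-annular subsurface $Z \supset A$ with a boundary component $\beta$, apply Lemma \ref{ann_lemma} (bound $8$) on the portions of the geodesic far from $\beta$ and the trivial one-per-edge bound on the rest, and dispose of the case where the whole geodesic lies in the $17$-neighbourhood of $\beta$ separately, arriving at $52$. The only difference is cosmetic: your annular analogue of Lemma \ref{bgit_far_away_2} returns the bound $8$ for an $8$-step segment, which is no better than the naive adjacency bound of one per edge, so the paper omits it and simply computes $8 + 36 + 8 = 52$ (written there as $8 + 18 + 18 + 8$); your extra care in actually constructing the pair $(Z,\beta)$ is a point the paper leaves implicit.
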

\begin{proof}
Pick a geodesic in $\mathscr{C}(F)$ whose all members cut $A$. 
Choose a subsurface $Z \subset F$ containing $A$ and its boundary component $\beta \subset \partial Z$. 
If the geodesic is contained in the 17-neighbourhood of $\beta$, then the diameter is bounded by $34$ and so the assertion holds. 
We may assume that the geodesic is not contained in the 17-neighbourhood of $\beta$. 
By Lemma \ref{ann_lemma} and an argument similar to  the proof of Theorem \ref{bgit}, we can deduce that the diameter of the annular subsurface projection is bounded by $ \leq 8 + 18 + 18 + 8 = 52$. 
\end{proof}

\begin{remark}
Our methodologies relies heavily on the implication $i(\alpha, \beta) \leq 2 \Rightarrow d_{F}(\alpha, \beta) \leq 2$, which does not hold when $F$ is a closed non-orientable surface of genus $3$ (i.e.\ $\chi(F) = -1$). 
\end{remark}

\begin{remark}
From Theorem \ref{bgit}, subsurface projection bounds of the original bounded geodesic image theorem due to Masur--Minsky for closed surfaces can be obtained as follows. 
Suppose that $F$ is orientable and that a subsurface $F'$ is not homeomorphic to an annulus. 
Consider the retraction $r \colon \mathscr{A} \mathscr{C} (F') \to \mathscr{C}(F')$ defined in \cite[Remark 5.1]{HPW15}. 
Since $r$ is 2-Lipschiz, any geodesic of length $\leq 32$ in $\mathscr{A} \mathscr{C} (F')$ is retracted onto a subset in $\mathscr{C}(F')$ with diameter $\leq 64$. 
Hence, $64$ is an upper bound for the original bounded geodesic image theorem. 
We note that, if $F'$ is an annulus, our subsurface projection is the same as that of Masur--Minsky. 
In conclusion, $64$ (resp.\ 52) is an upper bound for non-annular (resp.\ annular) subsurface projections in the original bounded geodesic image theorem. 

We also note that our bounds for annular and non-annular subsurface projections have higher accuracy than the bounds estimated by Webb  \cite[Theorem 4.2.1]{W14}. 
However, he also treated the subsurface projections from surfaces with boundary. 
\end{remark}

%%%%%%%% Appendix %%%%%%%%%
\section{Appendix: A hyperbolic constant for augmented curve graphs}\label{add}

By slightly modifying the proof for $\delta$-hyperbolicity of the curve graphs, we are able to estimate hyperbolic constant of augmented curve graphs. 
Fix $k \geq 2$. 
Recall that the augmented curve graph $G_k (F)$ of a surface $F$ is the graph such that vertices are the same as the curve graph, and that the edges are spanned by two vertices which have representatives intersecting at most $k$. 
By $d_G$ we denote the usual length metric in $G_k (F)$ . 
We note that the bicorn sequences are actual paths in $G_k (F)$. 

\begin{proposition} \label{aug_dist}
Let $\alpha, \beta$ be essential simple closed curves on $F$ with $i(\alpha, \beta) \geq 1$. 
Then we have 
$$d_G(\alpha, \beta) \leq \log_{k + 1} \left(i(\alpha, \beta) \right) + 1.  $$
\end{proposition}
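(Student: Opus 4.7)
The plan is induction on $n := i(\alpha,\beta)$. When $1 \leq n \leq k$ the curves $\alpha$ and $\beta$ are joined by an edge of $G_k(F)$, so $d_G(\alpha,\beta) \leq 1 \leq \log_{k+1}(n) + 1$. For the inductive step assume $n \geq k+1$ and that $\alpha$ and $\beta$ are in minimal position.

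The key input for the induction is the following generalization of Lemma \ref{1-3rd}: there exists an $(\alpha,\beta)$-bicorn curve $\gamma$ with $i(\alpha,\gamma) \leq k$ and $i(\beta,\gamma) \leq n/(k+1)$. Granting this, $\gamma$ is adjacent to $\alpha$ in $G_k(F)$, and the inductive hypothesis applied to $\gamma$ and $\beta$ yields
\[
d_G(\alpha,\beta) \;\leq\; 1 + d_G(\gamma,\beta) \;\leq\; 1 + \log_{k+1}\!\left(\tfrac{n}{k+1}\right) + 1 \;=\; \log_{k+1}(n) + 1,
\]
with the obvious modification when $i(\beta,\gamma)=0$, in which case $\gamma$ and $\beta$ are already joined by an edge.

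To prove the key claim I would imitate the construction in Lemma \ref{1-3rd} with $k+1$ bicorn curves in place of three. Pick $k$ consecutive edges $e_1,\ldots,e_k$ of $\Gamma_\alpha(\beta)$ and label their $k+1$ vertices $p_0,\ldots,p_k$ in their cyclic order along $\beta$. These same points appear in some cyclic order along $\alpha$, cutting $\alpha$ into $k+1$ arcs $a_0,\ldots,a_k$ whose interiors partition $\alpha \setminus \{p_0,\ldots,p_k\}$. For each $j$, let $b_j$ be the subarc of $e_1 \cup \cdots \cup e_k$ sharing endpoints with $a_j$; then $\gamma_j := a_j \cup b_j$ is an $(\alpha,\beta)$-bicorn curve, and because $b_j$ consists of at most $k$ consecutive edges of $\Gamma_\alpha(\beta)$, one isotopy as in Lemma \ref{1-3rd} gives $i(\alpha,\gamma_j) \leq k$. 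The partition property forces $\sum_{j=0}^{k} \#(\mathrm{Int}(a_j) \cap \beta) = n - (k+1)$, and each $\gamma_j$ picks up at most one further intersection with $\beta$ in a regular neighborhood of $e_1 \cup \cdots \cup e_k$, giving $\sum_{j=0}^{k} i(\beta,\gamma_j) \leq n$. Pigeonhole then produces the required $\gamma$.

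The main obstacle is verifying the ``at most one extra intersection per bicorn'' estimate uniformly across all $k+1$ curves, since a typical $b_j$ contains several vertices of $\{p_0,\ldots,p_k\}$ in its interior -- a situation absent from Lemma \ref{1-3rd}, where each $b_j$ has at most one interior vertex of the chosen edge pair. I expect this to follow from pushing $\gamma_j$ off $\beta$ consistently to one side throughout the neighborhood of $e_1 \cup \cdots \cup e_k$, but this is the point where the argument does genuine work beyond the case $k=2$ already handled in Lemma \ref{1-3rd}.
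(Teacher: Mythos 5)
Your proposal is correct, and it takes a genuinely different route from the paper's. The paper's own proof of Proposition \ref{aug_dist} simply invokes Lemma \ref{1-3rd}: it produces a bicorn curve $\gamma$ with $i(\alpha,\gamma)\leq 2$ and $i(\beta,\gamma)\leq i(\alpha,\beta)/3$, and concludes $d_G(\alpha,\beta)\leq 1+\log_{k+1}\left(\frac{i(\alpha,\beta)}{3}\right)+1$. That quantity equals $\log_{k+1}(i(\alpha,\beta))+2-\log_{k+1}3$, which is at most the claimed bound only when $\log_{k+1}3\geq 1$, i.e.\ when $k=2$; for $k\geq 3$ the paper's inductive step does not close as written. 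Your replacement of Lemma \ref{1-3rd} by a version producing $k+1$ bicorn curves from $k$ consecutive edges of $\Gamma_{\alpha}(\beta)$, hence a divisor of $k+1$ rather than $3$, is exactly what is needed to make the step an identity, $1+\log_{k+1}\left(\frac{n}{k+1}\right)+1=\log_{k+1}(n)+1$, uniformly in $k$; so your argument is not merely an alternative but the stronger input the statement actually requires when $k\geq 3$. The point you flag as the remaining obstacle does go through: after pushing $b_j$ off $\beta$ to one side, the interior vertices $p_1,\dots,p_{k-1}$ of $b_j$ are crossings of $\gamma_j$ with $\alpha$ only (this is what gives $i(\alpha,\gamma_j)\leq k$ after rounding one corner, exactly as in the count ``thrice minus one'' in Lemma \ref{1-3rd}), whereas crossings with $\beta$ can arise only at the two corners $\partial a_j=\partial b_j$, and choosing the side of the push-off so that it agrees with the side from which $a_j$ approaches $\beta$ at one corner leaves at most one crossing per bicorn. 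Combined with $\sum_{j}\#(\mathrm{Int}(a_j)\cap\beta)=n-(k+1)$ and the pigeonhole step, your key claim holds, and the rest of your induction is exactly parallel to the paper's.
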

\begin{proof}
Induction on $i(\alpha, \beta)$. 
By the definition of $G_k (F)$, any pair of essential simple closed curves $\alpha$ and $\beta$ intersecting at most $k$ is adjacent. 
Hence, the assertion is immediate when $i(\alpha, \beta) \leq k$. 

Suppose $i(\alpha, \beta) \geq k+1$. 
By Lemma \ref{1-3rd}, we have an $(\alpha, \beta)$-bicorn curve $\gamma$ such that $i (\alpha, \gamma) \leq 2$ and $i(\beta, \gamma) \leq \frac{i (\alpha, \beta)}{3}$. 
We have $d_G (\alpha, \beta) \leq d_{G} (\alpha, \gamma) + d_{G} (\beta, \gamma) \leq 1 + (\log_{k+1} (\frac{i(\alpha, \beta)}{3}) + 1)$, as desired. 
\end{proof}

It is interesting to note that the bound in Proposition \ref{aug_dist} is very similar to the well-known bound, $\log_{2} (i(\alpha, \beta)) + 1,$ for the Farey graph which is the curve graph of a torus. 

We prepare lemmas to estimate a hyperbolic constant of $G_{k} (F)$. 

\begin{lemma}\label{dist_bc_2}
Let $x_0, x_1, \ldots, x_m$ be a sequence of curves in $G_k (F)$ with $i(x_i, x_{i+1}) \leq 2$ $(1 \leq i \leq m-1)$ and $2^{n-1} < m \leq 2^{n}$ for some positive integer $n$. 
Suppose that $b$ is an $(x_0, x_m)$-bicorn curve. 
Then there exists a number $l$ such that $d_G (b, x_l) \leq n + 1 =  \lceil \mathrm{log}_2 (m) \rceil + 1$.  
\end{lemma}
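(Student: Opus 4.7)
The plan is to induct on $n$, closely following the template of Lemma \ref{dist_bc} but exploiting the fact that in $G_k(F)$ with $k \geq 2$, any two essential curves intersecting at most twice are automatically adjacent. This halves the per-step cost and upgrades the bound from $2n-1$ to $n+1$.

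For the base case $n = 1$ (so $m = 2$), after arranging $x_0, x_1, x_2$ to be pairwise in minimal position and sensible, I apply Lemma \ref{slimbicorn} to $\alpha = x_0$, $\beta = x_2$, $\delta = x_1$, and $\gamma = b$. If $i(b, x_1) \leq 2$, then $b$ is adjacent to $x_1$ in $G_k(F)$ and we take $l = 1$. Otherwise, the lemma yields a curve $\gamma'$ that is a bicorn between $x_1$ and one of $x_0, x_2$ (say $x_0$) with $i(b, \gamma') \leq 2$, so $d_G(b, \gamma') \leq 1$. Since $\gamma'$ is an $(x_0, x_1)$-bicorn, it lies setwise inside $x_0 \cup x_1$; pushing its $x_0$-subarc slightly off $x_0$ shows that at most $i(x_0, x_1) \leq 2$ transverse crossings with $x_0$ survive, so $d_G(\gamma', x_0) \leq 1$. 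The triangle inequality gives $d_G(b, x_0) \leq 2$, which matches the desired bound $n + 1 = 2$.

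For the inductive step ($n \geq 2$), set $m' := \lceil m/2 \rceil$, so that each of the two sub-chains $x_0, \ldots, x_{m'}$ and $x_{m'}, \ldots, x_m$ has length at most $2^{n-1}$, allowing the previous inductive case to apply to either. Applying Lemma \ref{slimbicorn} to $\alpha = x_0$, $\beta = x_m$, $\delta = x_{m'}$, and $\gamma = b$, I first dispose of the trivial subcase $i(b, x_{m'}) \leq 2$ (taking $l = m'$). In the remaining case, I obtain a bicorn curve $\gamma'$ between $x_{m'}$ and one of $x_0, x_m$ with $i(b, \gamma') \leq 2$, so $d_G(b, \gamma') \leq 1$. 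Since $\gamma'$ is a bicorn between the endpoints of one of the two sub-chains, the induction hypothesis produces an index $l$ with $d_G(\gamma', x_l) \leq (n-1) + 1 = n$, and the triangle inequality completes the step: $d_G(b, x_l) \leq n + 1$.

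The principal technical obstacle lies in the base case, namely the claim $i(\gamma', x_0) \leq 2$ for an $(x_0, x_1)$-bicorn $\gamma'$ when $i(x_0, x_1) \leq 2$. This is a local isotopy argument: the $x_0$-subarc of $\gamma'$ can be pushed to either side of $x_0$, and the only possible remaining transverse intersections come from where the $x_1$-subarc of $\gamma'$ meets $x_0$, which occurs at most $i(x_0, x_1)$ times. A secondary bookkeeping point is ensuring that each invocation of Lemma \ref{slimbicorn} is made with respect to representatives in pairwise minimal position satisfying $x_0 \cap x_m \cap x_{m'} = \emptyset$; this can be arranged by standard small perturbations without affecting the bicorn structure.
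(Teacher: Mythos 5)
Your proof is correct and follows essentially the same route as the paper: induct on $n$, split the chain at its midpoint, apply Lemma \ref{slimbicorn} to get a bicorn curve $\gamma'$ over one half-chain with $i(b,\gamma')\leq 2$ (hence $d_G(b,\gamma')\leq 1$ since $k\geq 2$), and recurse. The only cosmetic difference is in the base case, where the paper instead observes $i(b,x_1)\leq 4$ and invokes Lemma \ref{1-3rd} to produce a curve adjacent to both $b$ and $x_1$; your variant via Lemma \ref{slimbicorn} and the observation that an $(x_0,x_1)$-bicorn meets $x_0$ at most twice is equally valid and yields the same bound $n+1=2$.
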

\begin{proof}
We may assume that $m = 2^n$. 
Case $n = 1$. 
Since both of $x_0$ and $x_m$ intersect $x_1$ at most twice, it follows that $i (b, x_1) \leq 4$. 
By Lemma \ref{1-3rd}, we have a $(b, x_1)$-bicorn curve adjacent to both of $b$ and $x_1$. 
This implies $d_{G} (b, x_1) \leq 2$. 
By induction on $n$, it is easy to see that the assertion holds. 
\end{proof}

\begin{lemma} \label{7ball_prop}
Suppose that $\mathcal{G}$ is a geodesic connecting essential simple closed curves $\alpha$ and $\beta$ in $G_k(F)$, and $\mathcal{B}$ is an $(\alpha, \beta)$-bicorn sequence. 
Then $\mathcal{B}$ is contained in the $7$-neighborhood of $\mathcal{G}$. 
\end{lemma}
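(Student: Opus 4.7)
The plan is to mimic the proof of Proposition~\ref{12ball_prop}, with two structural changes: the bicorn sequence $\mathcal{B}$ is now a genuine $1$-Lipschitz path in $G_k(F)$ because consecutive bicorn curves satisfy $i(\gamma_{i-1}, \gamma_i) \leq 1 \leq k$, and the logarithmic distance estimate is improved from Lemma~\ref{dist_bc} to Lemma~\ref{dist_bc_2}. Together these drop the exponent from $2$ to $1$ in the key inequality, shrinking the neighborhood constant from $13$ to $7$.

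Let $b^* \in \mathcal{B}$ be a bicorn curve maximising $K := d_G(b^*, \mathcal{G})$; the goal is $K \leq 7$. I would split into three cases according to whether $\alpha$ and $\beta$ belong to the closed $(2K - 1)$-neighborhood of $b^*$ in $G_k(F)$. If both do, then $\mathcal{G}$ has length at most $4K - 2$, and applying Lemma~\ref{dist_bc_2} to $\mathcal{G}$ gives $K \leq \lceil \log_2(4K - 2) \rceil + 1$. If exactly one does, an asymmetric path construction produces an intermediate inequality dominated by the main case. In the main case, where $d_G(b^*, \alpha), d_G(b^*, \beta) \geq 2K$, the $1$-Lipschitz property of $\mathcal{B}$ lets me pick $\gamma_i, \gamma_j \in \mathcal{B}$ on opposite sides of $b^*$ with $d_G(b^*, \gamma_i) = d_G(b^*, \gamma_j) = 2K$. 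I would then choose $G_k$-geodesics $\mathcal{G}_i, \mathcal{G}_j$ of length at most $K$ connecting $\gamma_i, \gamma_j$ to vertices $g_i, g_j \in \mathcal{G}$, and form the concatenation $L := \mathcal{G}_i \cdot \mathcal{G}' \cdot \mathcal{G}_j$, where $\mathcal{G}'$ is the subpath of $\mathcal{G}$ joining $g_i$ and $g_j$. The triangle inequality bounds its length by $K + 6K + K = 8K$, and $b^*$ is a $(\gamma_i, \gamma_j)$-bicorn by the last assertion of Lemma~\ref{bicorn_path_exists_2}. Applying Lemma~\ref{dist_bc_2} to $L$ yields $l^* \in L$ with $d_G(b^*, l^*) \leq \lceil \log_2(8K) \rceil + 1$. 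If $l^* \in \mathcal{G}'$, then $l^* \in \mathcal{G}$ and $K \leq \lceil \log_2(8K) \rceil + 1$ directly; if $l^* \in \mathcal{G}_i$ (the other case is symmetric), the triangle inequality $2K = d_G(b^*, \gamma_i) \leq d_G(b^*, l^*) + d_G(l^*, \gamma_i) \leq d_G(b^*, l^*) + K$ gives the same bound. The inequality $K \leq \lceil \log_2(8K) \rceil + 1$ fails at $K = 8$, hence $K \leq 7$.

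The main technical obstacle is that Lemma~\ref{dist_bc_2} is stated with the hypothesis $i(x_i, x_{i+1}) \leq 2$, whereas the segments $\mathcal{G}_i, \mathcal{G}', \mathcal{G}_j$ of $L$ are $G_k$-geodesics in which consecutive vertices may intersect in up to $k$ points. Inspecting the proof of Lemma~\ref{dist_bc_2}, however, the hypothesis enters only through the estimate $i(b, x_1) \leq i(x_0, x_1) + i(x_1, x_2) \leq 2k$ followed by an application of Lemma~\ref{1-3rd}, which still returns a curve adjacent to both $b$ and $x_1$ in $G_k(F)$ whenever $k \geq 2$. Hence the lemma extends verbatim to sequences satisfying $i(x_i, x_{i+1}) \leq k$, which is exactly what $L$ supplies. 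This mild extension is the only new ingredient; the remainder of the argument is a routine transcription of Proposition~\ref{12ball_prop} into the $G_k$ metric.
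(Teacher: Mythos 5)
Your proof is correct and follows essentially the same route as the paper's: maximise $K = d_G(b^*, \mathcal{G})$, build the concatenated path $L$ of length at most $8K$, apply Lemma~\ref{dist_bc_2} to get $K \leq \lceil \log_2(8K) \rceil + 1$, and observe this fails for $K \geq 8$. Your remark that Lemma~\ref{dist_bc_2} must be extended from the hypothesis $i(x_i, x_{i+1}) \leq 2$ to $i(x_i, x_{i+1}) \leq k$ so that it applies to $G_k$-geodesics is a point the paper's proof passes over silently, and your justification of that extension (via $i(b, x_1) \leq 2k$ and Lemma~\ref{1-3rd}) is valid.
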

\begin{proof}
Let $b^{*}$ be a bicorn curve in $\mathcal{B}$ which is the furthest vertex from $\mathcal{G}$. 
Set $t:= d_{F}(b^{*}, \mathcal{G})$. 
By an argument similar as in the proof of Proposition \ref{12ball_prop}, the inequality $t \leq \lceil \mathrm{log}_2 (8t) \rceil + 1 = \lceil \mathrm{log}_2 (t) \rceil + 4$ holds. 
This implies $t \leq 7$. 

In fact, we can argue as follows. 
We may assume that neither $\alpha$ nor $\beta$ is contained in the closed $(2t)$-ball of $b^{*}$ (the other cases can be treated similarly).  
Pick a bicorn curve $\gamma_i$ in $\mathcal{B}$ preceding to $b^{*}$ and a bicorn curve $\gamma_j$ in $\mathcal{B}$ suceeding to $b^{*}$, which are contained in $S(b^{*}; 2t)$. 
Let $\mathcal{G}_i$ (resp.\ $\mathcal{G}_j$) be a geodesic of length $\leq t$ connecting $\gamma_i$ (resp.\ $\gamma_j$) and $\mathcal{G}$. 
By $g_i$ (resp.\ $g_j$) we denote the endpoint of $\mathcal{G}_i$ (resp.\ $\mathcal{G}_j$) which lies in $\mathcal{G}$. 
By $L$ we denote the concatenation of $\mathcal{G}_i$, $\mathcal{G}_j$ and the subpath $\mathcal{G}'$ of $\mathcal{G}$ which ends at $g_i$ and $g_j$. 
We note that the endpoints of $L$ are $\gamma_i$ and $\gamma_j$, and the length of $L$ is at most $t + (t+2t+2t+t) + t = 8t$. 
Applying Lemma \ref{dist_bc_2} to $b^{*}$ and $L$, it follows that $d_{F}(b^{*}, L) \leq  \lceil \mathrm{log}_2 (8t) \rceil + 1$. 
Pick a closed curve $l^{*}$ in $L$ which is the most nearest to $b^{*}$. 
In the case where $l^{*} \in \mathcal{G}'$, it holds that $k = d_{G}(b^{*}, l^{*}) \leq \lceil \mathrm{log}_2 (8t) \rceil + 1$, and the assertion holds. 
Therefore we consider the case where $l^{*} \in \mathcal{G}_i \cup \mathcal{G}_j$.  
Assume that $l^{*} \in \mathcal{G}_i$. 
Since $2t \leq d_{G}(b^{*}, \gamma_i) \leq d_{G}(b^{*}, l^{*}) + d_{G}(l^{*}, \gamma_i) \leq d_{G}(b^{*}, l^{*}) + t$, it holds that $d_{G}(b^{*}, l^{*}) \geq t$. 
On the other hand, since  $l^*$ is contained in $L$, it follows that $d_{G}(b^{*}, l^{*}) \leq  \lceil \mathrm{log}_2 (8t) \rceil + 1$. 
By the above argument, we have $ t \leq \lceil \mathrm{log}_2 (8t) \rceil + 1$, as desired. 
\end{proof}

Let us prove Theorem \ref{aug_hyp}. 

\begin{proof}[Proof of Theorem \ref{aug_hyp}]
Suppose that $\Delta$ is a geodesic triangle in $G_k(F)$. 
We may assume that the apices of $\Delta$ are sensible. 
By Lemmas \ref{bc_contain} and \ref{7ball_prop} and by Proposition \ref{slimbicorn_2}, we have that $\Delta$ has an $8$-center which is a bicorn curve between two apices. 
\end{proof}

\begin{proposition}
Suppose that $\mathcal{G}$ is a geodesic in $G_k(F)$ connecting essential simple closed curves $\alpha$ and $\beta$, and $\mathcal{B}$ is an $(\alpha, \beta)$-bicorn sequence. 
Then $\mathcal{G}$ is contained in the $14$-neighborhood of $\mathcal{B}$. 
\label{7ball_prop}
\end{proposition}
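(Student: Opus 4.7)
The plan is to imitate the argument of Corollary~\ref{Hausdorff_dist_cor}, substituting for Proposition~\ref{12ball_prop} the $7$-neighborhood bound from Lemma~\ref{7ball_prop} (the Lemma appearing earlier in this appendix, not the present proposition). The doubling from $7$ to $14$ is exactly what the ``Hausdorff-from-one-sided'' pattern yields, so one expects no new geometric input to be required.

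Concretely, for each bicorn curve $\gamma \in \mathcal{B}$ I would fix a nearest vertex $p(\gamma) \in \mathcal{G}$, so that $d_G(\gamma, p(\gamma)) \leq 7$; since $\alpha, \beta \in \mathcal{G}$ we may take $p(\alpha) = \alpha$ and $p(\beta) = \beta$. By Lemma~\ref{bicorn_path_exists_2}, consecutive bicorn curves $\gamma_k, \gamma_{k+1}$ in $\mathcal{B}$ satisfy $i(\gamma_k, \gamma_{k+1}) \leq 1 \leq k$ and are therefore adjacent in $G_k(F)$, so the triangle inequality yields
$$d_G(p(\gamma_k), p(\gamma_{k+1})) \leq 7 + 1 + 7 = 15.$$

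Now fix an arbitrary $g \in \mathcal{G}$ and index the vertices of $\mathcal{G}$ by the integers $0, 1, \ldots, L$ running from $\alpha$ to $\beta$. Let $f(\gamma)$ denote the index of $p(\gamma)$; since $\mathcal{G}$ is a geodesic, the $G_k$-distance between any two of its vertices equals the absolute difference of their indices, so $|f(\gamma_{k+1}) - f(\gamma_k)| \leq 15$. Let $t$ denote the index of $g$, and choose the smallest $k$ with $f(\gamma_k) \geq t$. The case $k=1$ forces $t = 0$ and is trivial; otherwise $f(\gamma_{k-1}) < t$, and from $(t - f(\gamma_{k-1})) + (f(\gamma_k) - t) \leq 15$ one of $\gamma^{*} \in \{\gamma_{k-1}, \gamma_k\}$ must satisfy $|f(\gamma^{*}) - t| \leq 7$. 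The triangle inequality then gives
$$d_G(g, \gamma^{*}) \leq d_G(g, p(\gamma^{*})) + d_G(p(\gamma^{*}), \gamma^{*}) \leq 7 + 7 = 14,$$
as desired.

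No real obstacle is expected. The only mildly delicate point is the discrete intermediate value step, which reduces to the elementary observation that two nonnegative integers summing to at most $15$ must contain one term bounded by $7$. Modulo this, the argument is a faithful $G_k$-analog of Corollary~\ref{Hausdorff_dist_cor} using the sharper one-sided neighborhood constant available in $G_k(F)$.
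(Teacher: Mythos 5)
Your proof is correct and follows essentially the same route as the paper: both rest on the $7$-neighborhood lemma, the adjacency of consecutive bicorn curves in $G_k(F)$, and a discrete intermediate-value argument along the geodesic giving $7+7=14$. Your version, via the nearest-point projection $p$ and the indices $f(\gamma)$, merely makes explicit the step the paper leaves implicit when it asserts that the minimal subpath $\mathcal{P}_g$ has length at most $15$.
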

\begin{proof}
Pick a vertex $g$ of $\mathcal{G}$ which is not contained in the $7$-neighbouhood of $\mathcal{B}$. 
Consider the minimal subpath $\mathcal{P}_{g}$ of $\mathcal{G}$ containing $g$, whose endpoints are contained in the $7$-neighbouhood of $\mathcal{B}$. 
The length of $\mathcal{P}_g$ is bounded above by $7 + 1 + 7 = 15$. 
By the definition of $\mathcal{P}_{g}$, we have that $g$ is at most $14$ away from an $(\alpha, \beta)$-bicorn curve nearest to an endpoint of $\mathcal{P}_{g}$. 
\end{proof}

\end{document}